\newtheorem{conjecture}{Conjecture}
\newtheorem{theorem}{Theorem}
\newtheorem{lemma}{Lemma}
\newtheorem{proposition}{Proposition}
\newtheorem{problem}{Problem}
\newtheorem{corollary}{Corollary}
\newcommand \B {{\mathcal B}}
\newcommand{\inc}{\!\parallel\!}
\newcommand \Po {{\mathcal P}}
\newcommand{\dw}{\ \! \downarrow\!\!}
\newcommand{\Bn}{\mathcal{B}(n)}
\newcommand{\C}{\mathcal{C}}
\newcommand{\co}{\mathrm{comp}}
\newcommand{\D}{\mathcal{D}}
\newcommand{\2}{\mathbf{2}^n}
\begin{document}

\parindent = 0cm
\parskip = .25cm

\vspace*{-2cm}

\title[Covering distributive lattices by intervals]{Covering distributive lattices by intervals}\vspace{-.5cm}

\vspace{-.5cm}
\author{Dwight Duffus\vspace{-.5cm}}
\address {Mathematics Department\\
  Emory University, Atlanta, GA  30322 USA\vspace{-.5cm}}
   \email{dwightduffus@emory.edu}
\author{Bill Sands\vspace{-.5cm}}
\address{Mathematics and Statistics Department\\
   The University of Calgary, Calgary, AB T2N 1N4 Canada\vspace{.5cm}}
  \email{sands@ucalgary.ca}
  \thanks{{\it Acknowledgement}: The second author thanks Emory University and the University of Victoria for their
  support of visits during which he conducted some of the research for this article.}
  
\date{\today}
\keywords{Distributive lattice, partially ordered set, convex subset, interval}
\subjclass[2010]{Primary: 06A07; 06D05}

\begin{abstract}
We consider the convex subset $[A,B]$ of all elements between two levels $A$ and $B$ of a finite 
distributive lattice, as a union of (or {\it covered by}) intervals $[a,b]$. A 1988 result of Voigt and Wegener 
shows that for such convex subsets of finite Boolean lattices, covers using $\max(|A|,|B|)$ intervals 
(the minimum possible number) exist. In 1992 Bouchemakh and Engel pointed out that this result holds more generally 
for finite products of finite chains. In this paper we show that covers of size $\max(|A|,|B|)$ exist for $[A,B]$ when $A$ 
is the set of atoms and $B$ the set of coatoms of any finite distributive lattice. This is a consequence of a more 
general result for finite partially ordered sets. We also speculate on the situation when other levels of finite 
distributive lattices are considered, and prove a couple of theorems supporting these speculations.
\end{abstract}

\maketitle

\thispagestyle{empty}

\section{Introduction}\label{S:intro}
Let $P$ be a finite partially ordered set and recall that an {\it order ideal} or {\it downset} $D$ 
(respectively, {\it order filter} or {\it upset} $U$) of $P$ satisfies
$x\in D$ whenever $x\le d$ in $P$ for some $d\in D$ (respectively, $y\in U$ 
whenever $y\ge u$ in $P$ for
some $u\in U$). For any $S\subseteq P$, let $\downarrow\!\!S 
= \{x\in P\;|\; x\le s \mbox{ for some } s\in S\}$ denote the order ideal 
generated by $S$ and $\uparrow\!\! S$, the order filter generated by $S$, defined dually. If 
$S=\{s\}$ then we write $\downarrow\!\!s$ instead of $\downarrow\!\!\{s\}$, 
and similarly for $\uparrow\!\!s$. Call $S$ a {\it convex} subset of $P$
if whenever $s\le x\le s'$ in $P$ with $s, s'\in S$, it follows that $x\in S$.  Note that
downsets and upsets are examples of convex subsets.

For subsets $A,B$ of a partially ordered set $P$, write $A\le B$ to mean 
that  for all $a\in A$ there is $b\in B$ so that $a\le b$, and for all 
$b\in B$ there is $a\in A$ so that $a\le b$. This is equivalent to
$A\subseteq\downarrow\!\!B$ and $B\subseteq\uparrow\!\!A$. 

If $A$ and $B$ are 
respectively the minimals and maximals of a convex subset $S$ then it is easy to 
see that $A$ and $B$ are antichains satisfying $A\le B$. Conversely, if $A$ 
and $B$ are antichains of $P$ satisfying $A\le B$, then the subset 
$$[A, B] \ =\ \{x\in P:a\le x\le b\mbox{ for some }a\in A,b\in B\}$$ 
is the smallest convex subset of $P$ containing $A\cup B$, that is, 
the {\it convex hull} of $A\cup B$. 
In the special case $A = \{a\}$ and $B = \{b\}$, $A\le B$ says that $a\le b$, 
and we replace $\left[\{a\}, \{b\}\right]$ by the standard notation $[a, b]$ for the {\it interval} 
determined by $a$ and $b$.

Here is the problem of interest.

\begin{problem}\label{Prob:main}
For antichains $A,B\subseteq P$ satisfying 
$A\le B$, determine the minimum number of intervals
$[a,b]$, where $a\in A$ and $b\in B$, that are required to cover $[A, B]$.
\end{problem}

Obviously,  at least $\max(|A|,|B|)$ intervals are required. Equally obviously,
$|A|\cdot|B|$ intervals will always suffice.

For the lower bound $\max(|A|,|B|)$, let us say that the convex hull $[A,B]$ of levels $A$ and $B$ in a ranked partially ordered set $P$ is 
{\it minimally coverable} if it can be covered by $\max(|A|,|B|)$ intervals.  (For a formal definition of levels, see Section \ref{S:ICP}.)
Voigt and Wegener  \cite{VW} showed that 
if $P= \mathcal{B}(n)$ is a Boolean lattice, then every such $[A,B]$ is minimally coverable. Bouchemakh and Engel   \cite{BE}
extended this result to arbitrary finite products of finite chains.

The result by Voigt and Wegener does not extend to all convex hulls generated by subsets of levels of the Boolean lattice  $\mathcal{B}(n)$.
Far from it:  the following family of examples, due to Peter Frankl, shows that such convex hulls sometimes require the maximum number of intervals to be covered rather than the minimum.  For disjoint nonempty subsets $X = \{x_1, x_2, \ldots , x_r\}$ and $Y = \{y_1, y_2, \ldots, y_s\}$ 
of $[n]$ let
$$\mathcal{X} = \{ \{x_1\}, \{x_2\}, \ldots , \{x_r\}\} \  \text{and} \  \mathcal{Y} = \{ X \cup \{y_1\},  X \cup \{y_2\}, \ldots,  X \cup \{y_s\}\}.$$
Then $\mathcal{X} \le \mathcal{Y}$, so that $[\mathcal{X}, \mathcal{Y}]$ is defined.  Also, for each $i = 1, 2, \ldots, r$ and $j = 1, 2, \ldots, s$, $\{x_i, y_j\} \in [\mathcal{X}, \mathcal{Y}]$ and the only interval defined by a member
of $\mathcal{X}$ and one of $\mathcal{Y}$ that contains $\{x_i, y_j\}$ is $[\{x_i\}, X \cup \{y_j\}]$.  Thus, $r \cdot s$ intervals are required
to cover $[\mathcal{X}, \mathcal{Y}]$.

A similar construction gives a graded partially ordered set, a lattice actually, containing entire levels $A$ and $B$ for which $|A|\cdot|B|$ 
intervals are required to cover $[A, B]$. The lattice  $L$ in Figure \ref{F:doubly} has $A = \{a_1, a_2, \ldots, a_r\}$ as the
atoms, $C =\{c_1, c_2, \ldots, c_s\}$ as the coatoms (replacing $B$), with $a_i < x < c_j$ for all $i, j$. (The elements $0, x, 1$
are required for $L$ to be a lattice.)  Finally, for all $i, j$, there is an element 
$x_{i,j}$ that has $a_i < x_{i, j} < c_j$ and is in a unique atom-coatom interval, $[a_i, c_j ]$.  
Thus, we require $|A|\cdot|C|$  intervals to cover $[A,C]$.

\begin{figure}[h!]
   \scalebox{.32}{\includegraphics{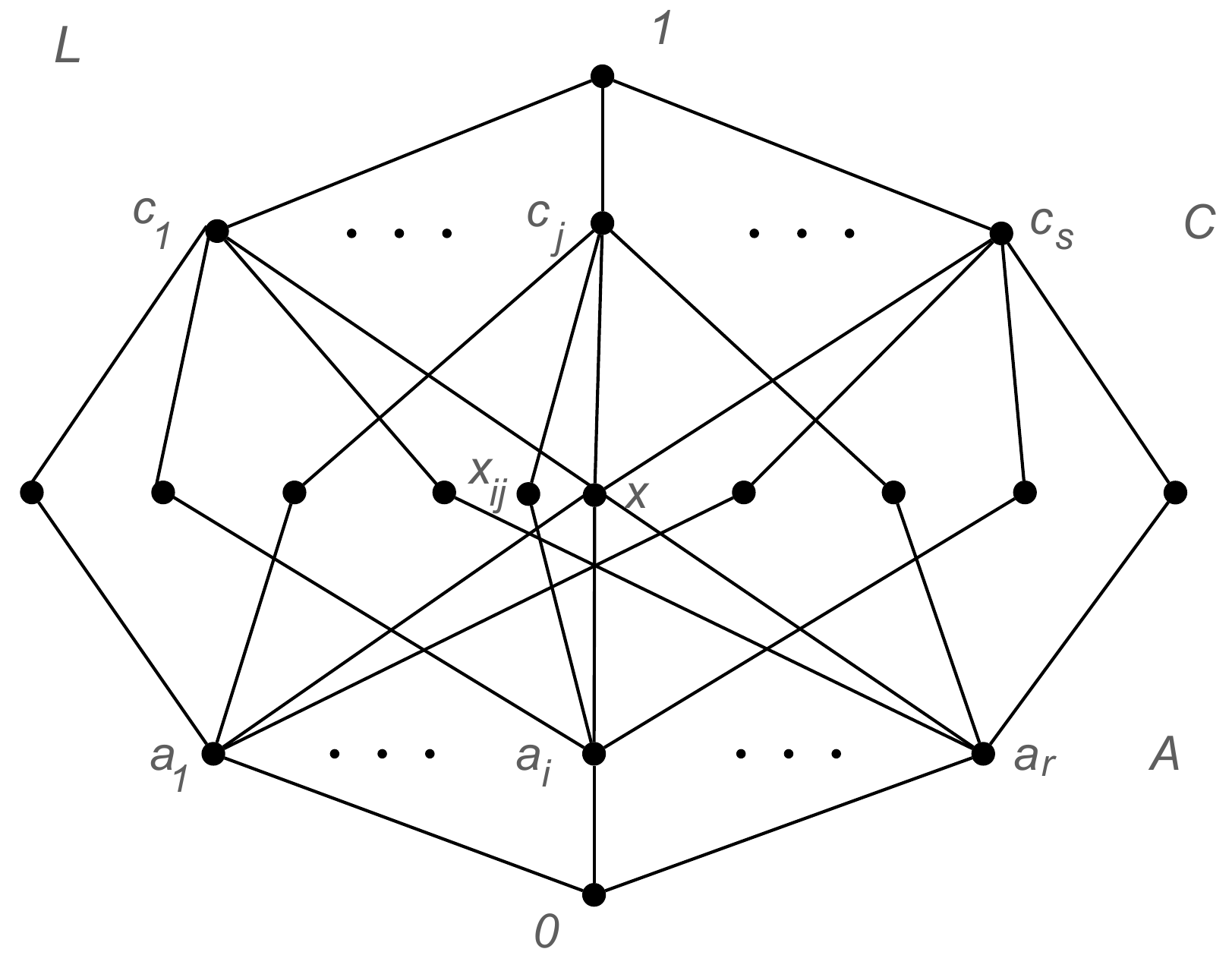}}
   \caption{Covering $[A,C]$ requires $|A| \cdot |C|$ intervals in the lattice $L$}\label{F:doubly}
\end{figure}

As we shall see, not all convex hulls of levels in finite distributive lattices are minimally coverable. But in one important case, they are:

\begin{theorem}\label{T:distributive}
Let ${\mathcal A}$ and ${\mathcal C}$ be the atoms and 
coatoms, respectively, of a finite distributive lattice ${\mathcal D}$, with  $|{\mathcal D}| \ge 3$. Then 
$[{\mathcal A},{\mathcal C}] = {\mathcal D}-\{0,1\}$ is the 
union of $\max(|{\mathcal A}|,|{\mathcal C}|)$ intervals.
\end{theorem}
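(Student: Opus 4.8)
The plan is to pass to Birkhoff's representation and reduce the covering problem to a purely combinatorial statement about reachability in an auxiliary digraph. First I would write $\mathcal{D}\cong J(P)$, the lattice of order ideals (downsets) of the poset $P$ of join-irreducibles of $\mathcal{D}$, ordered by inclusion. Under this isomorphism the atoms $\mathcal{A}$ correspond exactly to the principal ideals $\downarrow\! m$ with $m$ a minimal element of $P$, and the coatoms $\mathcal{C}$ to the sets $P\setminus\{M\}$ with $M$ a maximal element of $P$; thus $|\mathcal{A}|=|\mathrm{Min}(P)|=:p$ and $|\mathcal{C}|=|\mathrm{Max}(P)|=:q$, and $\mathcal{D}-\{0,1\}$ becomes the set of all proper nonempty downsets of $P$. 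Since $|\mathcal{D}|\ge 3$ forces $|P|\ge 2$, and since the order dual of a distributive lattice is again distributive (interchanging atoms and coatoms and fixing $\max(p,q)$), I may assume $q\ge p$. The atom–coatom interval determined by $m$ and $M$ is $[\downarrow\! m,\,P\setminus\{M\}]$; it is a genuine interval precisely when $m\ne M$, and it contains a downset $D$ exactly when $m\in D$ and $M\notin D$.

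Next I would reformulate covering in closure terms. A family $H\subseteq\mathrm{Min}(P)\times\mathrm{Max}(P)$ covers $\mathcal{D}-\{0,1\}$ if and only if the only downsets closed under all the implications $m\in D\Rightarrow M\in D$, for $(m,M)\in H$, are $\emptyset$ and $P$; this is merely the contrapositive of the covering condition. To attain the optimal count $q=\max(p,q)$ I would use each coatom once: fix a surjection $\sigma\colon\mathrm{Max}(P)\to\mathrm{Min}(P)$ and put $H=\{(\sigma(M),M):M\in\mathrm{Max}(P)\}$. Form the digraph $\Gamma$ on $\mathrm{Max}(P)$ with an arc $M\to M'$ whenever $\sigma(M')\le M$. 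Chasing the closure of a single minimal $m_0$ under the implications and the downset rule shows that this closure is all of $P$ as soon as every maximal is $\Gamma$-reachable from one of the maximals assigned to $m_0$ by $\sigma$; hence if $\sigma$ is onto (so each $m_0$ equals some $\sigma(M_{*})$, providing an entry point) and $\Gamma$ is strongly connected, no proper nonempty closed downset survives and $H$ is a cover. Surjectivity is also necessary, since a minimal outside the image of $\sigma$ spans a proper nonempty closed downset.

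The whole theorem therefore reduces to the following combinatorial lemma, which I expect to be the main obstacle: for every finite poset $P$ with $|P|\ge 2$ and $q\ge p$ there is a surjection $\sigma\colon\mathrm{Max}(P)\to\mathrm{Min}(P)$, with $\sigma(M)\ne M$ for all $M$, such that $\Gamma$ is strongly connected. I would prove this by induction (say on $q$), and the delicate point is that one cannot in general route through the minimals one at a time: several minimals may sit below a single maximal, and $\Gamma$ need not contain a Hamiltonian cycle even when a suitable strongly connected $\sigma$ exists. The productive observations are that descent from a maximal $M$ reaches all of $\mathrm{Down}(M)$ at once, and—crucially—that $\sigma$ may send a maximal to a minimal in an unrelated part of $P$; such cross assignments are exactly what stitch the otherwise separate pieces (and the connected components of the comparability bipartite graph between $\mathrm{Min}(P)$ and $\mathrm{Max}(P)$) into a single strongly connected $\Gamma$.

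Finally, the constraint $\sigma(M)\ne M$ is only at issue for elements of $P$ that are simultaneously minimal and maximal, and a short argument shows that the hypotheses $|P|\ge 2$ and $q\ge p$ leave enough room to avoid any forced fixed point. Granting the lemma, the family $H$ constructed above is a cover of $\mathcal{D}-\{0,1\}=[\mathcal{A},\mathcal{C}]$ by $q=\max(|\mathcal{A}|,|\mathcal{C}|)$ intervals, which completes the proof.
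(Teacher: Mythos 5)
Your setup is correct and is essentially the paper's own: under Birkhoff duality the atoms become the principal ideals $\downarrow\! m$ ($m$ minimal in $P$), the coatoms become the downsets $P\setminus\{M\}$ ($M$ maximal in $P$), and your ``closure under the implications $\sigma(M)\in D\Rightarrow M\in D$'' reformulation of covering is exactly the contrapositive of condition (i) in Proposition~\ref{P:duality}, specialized to $A=\mathrm{Min}(P)$, $B=\mathrm{Max}(P)$ (for these antichains every proper nonempty downset automatically meets $A$ and omits part of $B$). Your sufficiency argument is also sound: if $\sigma$ is onto and the digraph $\Gamma$ is strongly connected, then no proper nonempty downset is closed under the implications, so the $q$ intervals cover $\mathcal{D}-\{0,1\}$.

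However, there is a genuine gap precisely at the core of the argument: the ``combinatorial lemma'' asserting the existence of a surjection $\sigma\colon\mathrm{Max}(P)\to\mathrm{Min}(P)$ with $\sigma(M)\ne M$ and $\Gamma$ strongly connected is never proved --- you explicitly write ``I would prove this by induction'' and ``Granting the lemma.'' That lemma \emph{is} the theorem: it is (a slight strengthening of) Theorem~\ref{T:poset} in the case $A=\mathrm{Min}(P)$, $B=\mathrm{Max}(P)$, and everything preceding it in your proposal is routine translation, so nothing substantive has been established. The lemma is true, but the known route to it is not your proposed induction on $q$; it is the paper's reduction: pass to the bipartite poset induced on $\mathrm{Min}(P)\cup\mathrm{Max}(P)$ (this changes neither the minimals, the maximals, nor the comparabilities defining $\Gamma$); note that deleting comparabilities between minimals and maximals can only delete arcs of $\Gamma$, so it suffices to treat a comparability-minimal configuration, whose components are stars; and for a disjoint union of stars $S_1,\dots,S_n$ with designated minimal $a_i$ and maximal $b_i$ in $S_i$, the cyclic assignment $\sigma(b_i)=a_{i+1}$ (indices mod $n$, completed arbitrarily to a surjection) makes $\Gamma$ strongly connected: the $b_i$ form a directed cycle, any other maximal of $S_i$ has an arc to $b_{i-1}$ because $a_i$ is the unique minimal below it, and it receives an arc from $b_j$, where $S_j$ is the star containing its $\sigma$-image. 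Without this (or some equivalent) argument your proposal does not prove Theorem~\ref{T:distributive}.
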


Theorem \ref{T:distributive} cannot be extended to all pairs of levels in a distributive lattice.
For example, in the lattice $L$ consisting of two copies of the Boolean lattice $\mathcal{B}(3)$ overlapping 
in an edge, the levels $L_2$ and $L_3$ both 
have three elements, but any cover of $[L_2,L_3] = L_2 \cup L_3$ requires four 
intervals. This is easily generalized to produce a distributive lattice 
with consecutive levels of sizes $n$ and $m$ whose union
requires $m+n-2$ intervals -- see Figure \ref{F:Glued}
for diagrams of each. We will 
have more to say about the distributive lattice situation in Section \ref{S:more-dist}.

\begin{figure}[h!]
   \scalebox{.38}{\includegraphics{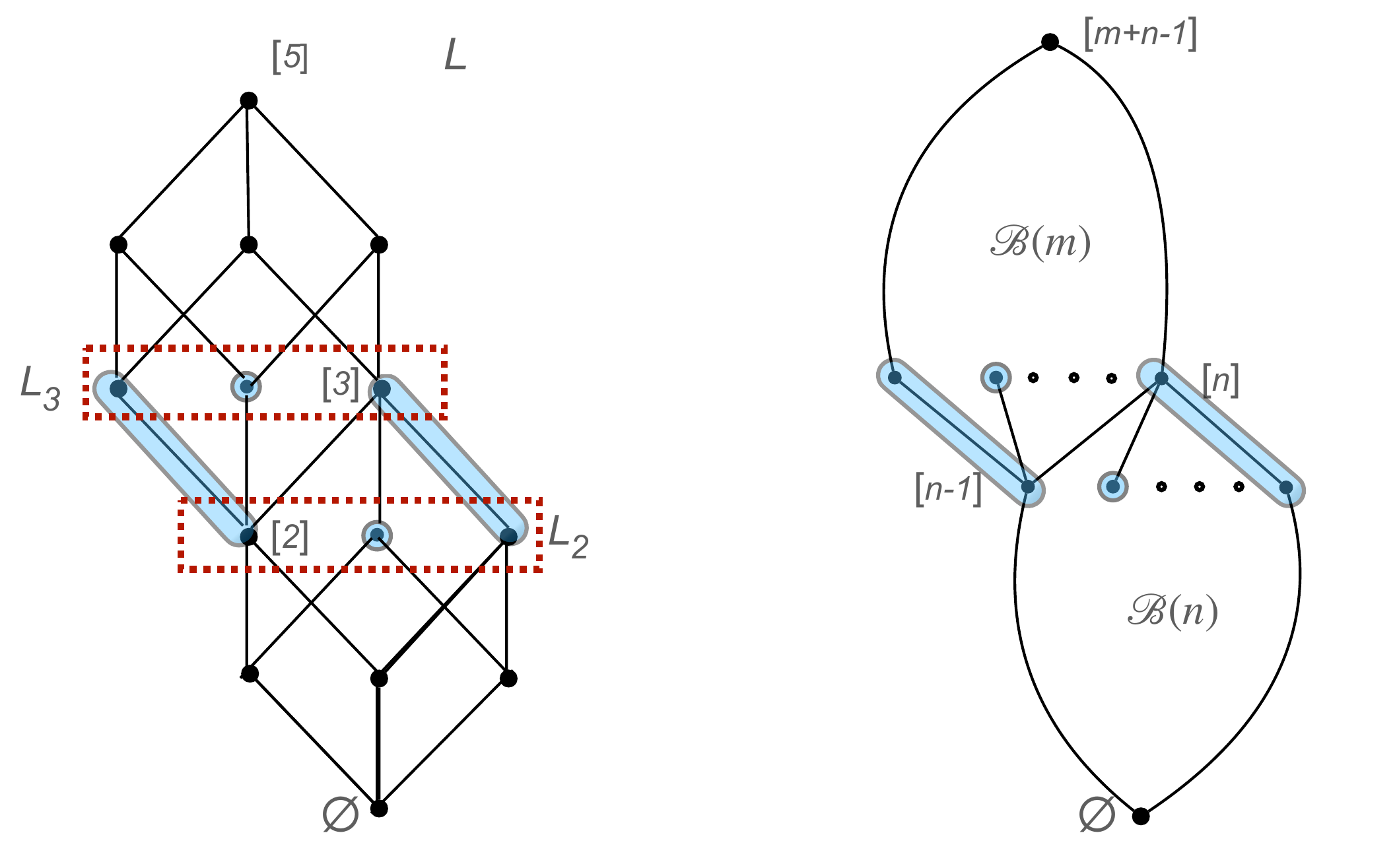}}
   \caption{Covering $L_2 \cup L_3$ requires 4 intervals. On the right, covering levels $n-1$ and $n$, of 
   sizes $n$ and $m$ respectively, requires $m+n-2$ intervals.}\label{F:Glued}
\end{figure}

Theorem \ref{T:distributive} will be a consequence of the following stronger result on partially 
ordered sets, which may be of independent interest.\\[-.2cm]

\begin{theorem}\label{T:poset}
 Let $A,B$ be antichains of a partially ordered set $P$ with 
$|A|\le|B|$ and so that $A\le B$ in $P$. Then there exists an onto 
function $f:B\to A$ so that, for any order ideal $X$ of 
$P$ satisfying $A\cap X\ne\emptyset$ and $B\not\subseteq X$, 
there is some $b\in B-X$ with $f(b)\in X$.
\end{theorem}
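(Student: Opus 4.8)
The plan is to recast the ideal condition as a closure (reachability) property and then produce $f$ together with an auxiliary cyclic ordering of $B$. Call an order ideal $X$ \emph{$f$-closed} if $f(b)\in X$ implies $b\in X$ for every $b\in B$; then the property demanded of $f$ is precisely that every $f$-closed ideal meeting $A$ must contain all of $B$. I would first record a clean sufficient condition: if $f$ is onto and the elements of $B$ can be arranged in a cyclic sequence $b_1,b_2,\dots,b_n$ (indices mod $n$) with $f(b_{i+1})\le b_i$ for all $i$, then $f$ works. Indeed, given an $f$-closed ideal $X$ with some $a_0\in A\cap X$, write $a_0=f(b_j)$ using surjectivity; $f$-closedness gives $b_j\in X$, and then $f(b_{j+1})\le b_j$ together with $X$ being a downset forces $f(b_{j+1})\in X$, hence $b_{j+1}\in X$; iterating around the cycle puts all of $B$ into $X$, contradicting $B\not\subseteq X$.

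Next I would reduce the existence of such a pair to a Hall condition on the comparability bipartite graph $\Gamma$ between $A$ and $B$ (edges $a\le b$). Fix any cyclic order on $B$ and require, for each $b$, that its cyclic successor map under $f$ into $\downarrow b\cap A$. Choosing an onto $f$ obeying these lists is a surjective system-of-representatives problem, and a short computation identifies the governing Hall inequality as exactly $|N_\Gamma(T)|\ge|T|$ for all $T\subseteq A$, where $N_\Gamma(T)=\uparrow T\cap B$. I would then note that at the top this is automatic, since $B\subseteq\uparrow A$ gives $N_\Gamma(A)=B$; and whenever the full condition holds I obtain an $A$-saturating matching, extend it to an onto $f$ by assigning the remaining elements of $B$ arbitrarily within their nonempty lists, and conclude via the sufficient condition above. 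This already settles the Boolean and chain-product intuition, where the comparability graph is dense enough that Hall never fails.

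The remaining and genuinely delicate case is when Hall's condition fails, i.e.\ there is a proper nonempty $T\subsetneq A$ with $|N_\Gamma(T)|<|T|$; then no single cycle as above can exist, so the closure structure must branch. I would handle this by induction on $|A|$: select a canonical deficient block $(T,U)$ with $U=N_\Gamma(T)$ (taking $T$ maximally deficient so that the comparabilities separate cleanly, in the spirit of the Dulmage--Mendelsohn decomposition), solve the flipped sub-problem on the block and the residual problem on $(A\setminus T,\,B\setminus U)$ by induction, and splice the resulting structures so that entry into the block forces the closure to swallow first the block and then the rest. The hard part will be exactly this splicing across the block interface: one must arrange the surjection and the branching of the closure so that \emph{every} $f$-closed ideal meeting $A$ still absorbs the entire deficient block and propagates outward, and verifying that a maximally deficient $T$ leaves residual antichains satisfying $A\setminus T\le B\setminus U$ with the correct size comparison (so that the induction hypothesis applies) is where the real care is required.
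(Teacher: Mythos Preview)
Your cyclic sufficient condition and its reformulation via $f$-closed ideals are correct, and the Hall condition $|N_\Gamma(T)|\ge|T|$ you derive for the existence of such a cyclic $f$ is the right one.  However, the proof has a genuine gap in the Hall-failure case, and not merely in details that need ``real care.''

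Take $A=\{a_1,a_2,a_3\}$, $B=\{b_1,b_2,b_3\}$ with $a_1,a_2,a_3<b_1$ and $a_3<b_2,b_3$ (and no other comparabilities).  Here $T=\{a_1,a_2\}$ is deficient with $U=N_\Gamma(T)=\{b_1\}$.  Your inductive plan runs into two obstructions.  First, the ``flipped sub-problem'' on $(T,U)$ has $|U|<|T|$, so the theorem (or its dual) applied to the block produces a surjection $T\to U$, which is the wrong direction for building $f:B\to A$.  Second, and more fundamentally, for \emph{any} map $f:B\to A$, the restriction $f|_U$ has image of size at most $|U|<|T|$, so $f|_U$ can never cover $T$.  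Hence if $f':B\setminus U\to A\setminus T$ is the inductive surjection on the residual, no choice of $f|_U$ can make $f=f'\sqcup f|_U$ onto $A$.  The surjection must therefore cross the block interface --- some $b\in B\setminus U$ must map into $T$ --- and once you allow that, you lose control of the closure property on the residual, since an $f$-closed ideal hitting $A\setminus T$ need no longer propagate inside $B\setminus U$.  Your sketch does not address how to arrange this crossing while keeping both pieces $f$-closed-absorbing.

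The paper sidesteps the whole issue.  Rather than decomposing along deficient sets, it takes a minimal counterexample and observes that removing a comparability $c\prec b$ (when $b$ has another lower cover in $A$ and $c$ another upper cover in $B$) only makes the problem harder: any order ideal of the original poset is still an order ideal after the deletion.  This reduces the problem to a disjoint union of stars, where the cyclic construction works directly --- choose one minimal $a_i$ and one maximal $b_i$ in each star $S_i$, set $f(b_i)=a_{i+1}$ cyclically across the stars, and assign the remaining elements of $B$ arbitrarily.  No Hall condition, no block decomposition, and no splicing are needed.
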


If one wishes to remove the restriction in Theorem 1 that the levels in question be the atoms and coatoms, 
one obvious case in which the theorem still holds is when one of the levels in question has size 1. Also, 
the first example in Figure 2 shows that ``size 1" cannot be replaced by``size 3". So what about size 2? 
Our next result handles this case to happy effect.

  \begin{theorem}\label{T:size2}
  Let $D$ be a distributive lattice with distinct levels $A$ and $B$ that satisfy $A \le B$ and $\min(|A|, |B|) = 2$.  
  Then $[A, B]$ has a cover by $\max(|A|, |B|)$ intervals.
  \end{theorem}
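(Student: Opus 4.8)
The plan is to normalize the problem by duality, reduce the covering question to a two-colouring problem, and then resolve that colouring using the structure of distributive lattices.

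First I would reduce to the case where the level of size $2$ is the lower one. Since the order dual of a finite distributive lattice is again a finite distributive lattice, and dualizing exchanges the two levels while turning each interval $[a,b]$ into $[b,a]$ and preserving $\max(|A|,|B|)$, I may assume $A=\{a_1,a_2\}$ is the lower level and $B=\{b_1,\dots,b_m\}$ the upper one, with $m=|B|\ge 2=|A|$; this also matches the hypothesis $|A|\le|B|$ of Theorem \ref{T:poset}. The key preliminary observation is that any cover of $[A,B]$ by exactly $m$ intervals is forced into a rigid shape: because $B$ is an antichain, each $b\in B$ can only be the \emph{top} of an interval, so the $m$ intervals must be exactly $[g(b),b]$ for a function $g\colon B\to A$ with $g(b)\le b$, and covering the bottom level $A$ forces $g$ to be onto. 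Thus the theorem amounts to producing such a $g$ whose intervals cover $[A,B]$.

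Next I would isolate which elements actually constrain $g$. Classify each $x\in[A,B]$ by which of $a_1,a_2$ lies below it. If both do, then $x\ge a_1\vee a_2$ and $x$ is covered by $[g(b),b]$ for any $b\ge x$, since then $g(b)\in A$ automatically lies below $x$; so such $x$ impose no constraint. If only $a_1\le x$ (a \emph{type-$1$} element), then $x$ can only be covered from $a_1$, so I need some $b\ge x$ with $g(b)=a_1$; moreover, if some cover $b\ge x$ has $b\not\ge a_2$, then $g(b)$ is forced to equal $a_1$ and $x$ is covered for free. Hence the only binding constraints come from type-$1$ elements \emph{all} of whose covers lie above $a_1\vee a_2$, and dually for type-$2$ elements. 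For such a type-$1$ element $x$, every $b\in B$ with $b\ge x$ satisfies $b\ge a_2$, so its set of covers equals $\{b\in B : b\ge x\vee a_2\}=\,\uparrow(x\vee a_2)\cap B$, a principal-filter trace based at $x\vee a_2\ge a_1\vee a_2$. Writing $F=\{b\in B : b\ge a_1\vee a_2\}$, the problem becomes a two-colouring of $F$ (colour $b$ by $g(b)\in\{a_1,a_2\}$) in which each type-$1$ constraint demands an $a_1$ in its filter and each type-$2$ constraint an $a_2$.

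The main obstacle is showing this two-colouring is feasible; this is where distributivity must enter, since the graded lattices of Figure \ref{F:Glued} show such colourings fail once a level has three elements. A conflict would require, in the cleanest case, a single $b\in F$ that is the unique $B$-cover of both a type-$1$ and a type-$2$ element (forcing $g(b)$ to be both $a_1$ and $a_2$), or a more intricate chain of forced colours. I expect the heart of the argument to be a rigidity lemma, proved through Birkhoff's representation $D\cong\mathcal O(P)$, describing exactly when an element of $[A,B]$ has a unique cover in $B$ in terms of the local structure of the join-irreducibles; such unique-cover elements are highly constrained. I would then argue that a type-$1$ and a type-$2$ element cannot force the same $b$, since their two forcings would impose incompatible constraints on the join-irreducibles in $a_1\setminus a_2$ and in $a_2\setminus a_1$, contradicting that the lower level has only two elements. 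The delicate step I expect to occupy most of the work is passing from this pairwise non-conflict to genuine feasibility of the whole colouring: I would reduce the constraints to their minimal filters and exploit that hitting a tighter filter automatically hits every looser one, so that the forced colours never propagate into a contradiction.
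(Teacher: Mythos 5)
Your setup is fine as far as it goes: the dualization, the observation that a cover by $|B|$ intervals can be taken of the form $[g(b),b]$ for an onto $g\colon B\to A$ (after enlarging any interval whose bottom is not in $A$), and the reduction to a two-colouring of $F=\{b\in B: b\ge a_1\vee a_2\}$ subject to filter-trace constraints are all correct and compatible with the paper's framing. But the proof stops exactly where the theorem begins. Everything after the reduction is conditional: the ``rigidity lemma'' via Birkhoff representation is never stated, let alone proved; the claim that a type-$1$ and a type-$2$ element cannot force the same $b$ is asserted, not argued; and the final passage from pairwise non-conflict to global feasibility is explicitly deferred (``I expect,'' ``I would argue''). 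Since the entire role of distributivity in this theorem is to make that colouring feasible, what you have written is a plan for a proof, not a proof.

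Worse, the bridge you are counting on is false as a general principle. Feasibility of this kind of constraint system is \emph{not} implied by pairwise compatibility: take traces $\{b_1,b_2\}$, $\{b_2,b_3\}$, $\{b_3,b_1\}$ each appearing as both a type-$1$ and a type-$2$ constraint; every pair of constraints is jointly satisfiable, yet no two-colouring of $\{b_1,b_2,b_3\}$ meets them all. This is precisely the kind of global obstruction realized by the glued-$\mathcal{B}(3)$ example of Figure \ref{F:Glued} (there with a $3$-element lower level), so any correct proof must show that distributivity together with $|A|=2$ excludes such configurations --- and that exclusion is the hard part. For contrast, the paper first proves $a\wedge b\prec a,b$ so that one may assume $A$ is the set of atoms (making computations like $c\wedge a=0$ available), and then argues globally: when the bipartite poset $A\cup B$ is not complete bipartite an explicit assignment works, and in the complete bipartite case one shows that among the $n$ candidate covers $\mathcal{F}_1,\ldots,\mathcal{F}_n$ (each using $b$ exactly once) at least one succeeds, deriving a contradiction from hypothetical witnesses $c_1,\ldots,c_n$ via joins, meets, and maximal-chain arguments. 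Nothing in your proposal substitutes for that global argument.
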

  
  Finally, we make a modest improvement on  the obvious upper bound on the number of intervals required to cover
  the convex subset of a distributive lattice generated by two of its levels.

  \begin{theorem}\label{T:slight}
  Let $D$ be a distributive lattice with distinct levels $A$ and $B$ that satisfy $A \le B$ and $|A|, |B| > 1$. 
  Then $[A, B]$ has a cover by at most

\begin{enumerate}

  \item $|A| \cdot |B| - \min (|A|, |B|)$ intervals in $D$, if $|A| \ne |B|$ or $|A| = |B|$ is even,\\
  
  \item $|A| \cdot |B| -  \min (|A|, |B|)+ 1$ intervals in $D$, if  $|A| = |B|$ is odd.
  
\end{enumerate}

\end{theorem}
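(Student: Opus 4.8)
The plan is to fix, by passing to the order dual of $D$ if necessary (which preserves distributivity, reverses ranks so that the two levels swap roles, and sends the interval $[a,b]$ to $[b,a]$), the normalization $|A|\le|B|$; write $m=|A|\le n=|B|$, so the target is a cover of size at most $m(n-1)=|A||B|-m$, with one extra interval allowed when $m=n$ is odd. I would build the cover by choosing, for each $a\in A$, a subset $S_a\subseteq\{b\in B:a\le b\}$ and taking all intervals $[a,b]$ with $b\in S_a$; this family covers $[A,B]$ exactly when for every $x\in[A,B]$ some retained pair $(a,b)$ has $a\le x\le b$, and its size is $\sum_a|S_a|$. So it suffices to \emph{omit}, for at least $m-p$ of the rows $a$, one comparable interval, where $p$ denotes the number of incomparable pairs in $A\times B$: the number of comparable pairs is $|A||B|-p$, and omitting one interval in $r$ rows yields a cover of size $(|A||B|-p)-r$.

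First I would reduce to the dense regime. If $p\ge m$ the trivial family of all comparable intervals already has size $|A||B|-p\le|A||B|-m$ and we are done; so I assume $p<m$, which forces almost every $a$ to lie below almost every $b$. Next I would isolate exactly which omissions are dangerous. The key observation is a pure counting fact, independent of distributivity: since we delete at most one interval per row, an element $x$ can lose \emph{all} of its covering intervals only if $x$ has a single element $b$ of $B$ above it and every $a\le x$ happens to omit the interval aimed at that same $b$. Thus the only constraints on the choice $a\mapsto b_a$ of omitted target are (i) $b_a$ must be comparable to $a$; (ii) if $(a,b)$ is a \emph{private} pair, meaning some $x$ has $a$ as its unique lower element of $A$ and $b$ as its unique upper element of $B$, then $b_a\ne b$; and (iii) no element $x$ with a unique upper element $b$ may have all of $\{a\in A:a\le x\}$ omit toward $b$ (a ``unanimous column'').

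The remaining task is combinatorial: choose omissions in at least $m-p$ rows subject to (i)--(iii). I would use the Birkhoff representation $D\cong J(Q)$ to control the private and thin-top structure in the dense regime, pinning down $v=\bigwedge B$ and $u=\bigvee A$ and arguing that the deficient rows and columns (those involved in incomparabilities or in private pairs) are so few and so tamely arranged that the forced pattern is essentially a partial matching between them; a row is genuinely unable to omit only when every comparable target above it is privately forbidden, and distributivity should bound the number of such rows. Since the conflicts in (iii) couple the choices across rows, the existence of enough valid omissions becomes a bipartite system-of-distinct-representatives / deficiency question.

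The hard part, and where I expect to spend the most effort, is the resulting parity dichotomy. I anticipate that when $|A|\ne|B|$ the surplus elements of the larger level give enough slack to satisfy (i)--(iii) in $m-p$ rows simultaneously, and that when $m=n$ is even the conflict graph decomposes into matchable pieces; but when $m=n$ is odd the forced pattern can close up into an odd cycle in which one row cannot be assigned an omission without creating a unanimous column, costing exactly one interval and producing the $+1$. Making this precise --- showing that at most one row is ever unavoidably stuck, and that this happens precisely in the odd equal-level case --- is the crux of the proof, and I would establish it by a careful analysis of the conflict structure guaranteed by distributivity rather than by any global counting bound.
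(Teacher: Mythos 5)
Your setup is sound, and its skeleton actually matches the paper's: start from the family of all comparable intervals, delete at most one interval per row of $A$, and observe (correctly) that a point $x$ can become uncovered only when $x$ lies below a unique $b\in B$ and every $a\le x$ deletes toward that same $b$; your ``private pair'' condition (ii) is the case $|\{a\in A: a\le x\}|=1$. But the proposal stops exactly where the proof has to start. Everything after your constraint (iii) is conditional (``distributivity should bound\dots'', ``I anticipate\dots'', ``I expect\dots''), and the two facts you would need are never established. First, you have no lemma controlling private pairs. The paper's proof rests entirely on Lemma \ref{l:unique}: if two elements each lie in a unique level-interval, say $x\in[a,b]$ and $y\in[a,c]$ with the same bottom $a$ but $b\ne c$, and each is incomparable to everything else in the two levels, then the lower level is the singleton $\{a\}$; this is proved by a maximal-chain/distributivity computation, and it (with its dual) is the only place distributivity enters. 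Without such a statement nothing tames your ``conflict structure'' --- compare Figure \ref{F:doubly}, where every pair is private and no deletion at all is possible --- so the deficiency/SDR problem you pose is simply left open. Second, the parity dichotomy is asserted, not proved: your ``odd cycle'' picture is a guess, whereas the paper gets the dichotomy from a concrete computation. It tries the $t$ cyclic diagonal deletions $\{[a_1,b_{k+1}],\dots,[a_s,b_{k+s}]\}$ (indices mod $t$); if all fail, the resulting private witnesses define maps $f,g$ on $\{0,\dots,s-1\}$ with image $\{1,\dots,s\}$ and $g(k)\equiv f(k)+k \pmod{s}$, and summing forces $s(s-1)/2\equiv 0\pmod{s}$, impossible for $s$ even. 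Choosing $s=m$ or $s=m-1$ even is precisely what produces the $+1$ when $|A|=|B|$ is odd.

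There is also a structural simplification you miss that makes your version strictly harder than the paper's. Because you allow different rows to delete toward the same column, you must handle the coupled ``unanimous column'' constraint (iii). The paper deletes a partial permutation matrix (distinct rows \emph{and} distinct columns), so a unanimous column automatically has a single row, every obstruction collapses to a private pair, and constraint (iii) never arises; the cyclic shifts then let pigeonhole (in the case $|A|<|B|$) or the parity computation (in the case $|A|=|B|$), combined with Lemma \ref{l:unique}, finish the proof. As written, your proposal identifies the right objects but proves neither the lattice-theoretic lemma nor the counting argument, so it is a plan rather than a proof.
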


Here is how the paper is organized. In the next section, we discuss the genesis of this paper, and expand on some aspects of 
the Voigt-Wegener \cite{VW} result that $\Bn$ is minimally coverable  (see the strong interval cover property defined in the
next section).
Necessary preliminaries  and the similarity of Theorem \ref{T:poset} to a distributive lattice result
are presented in Section \ref{S:distributive}. In Section \ref{S:Proof}, we prove Theorem \ref{T:poset}
and derive Theorem \ref{T:distributive}. In Section \ref{S:more-dist} we give proofs of Theorems \ref{T:size2} and \ref{T:slight}
along with our speculations on how these two theorems might be improved.

\section{The Interval Cover Property and Covers of The Boolean Lattice $\Bn$}\label{S:ICP}

Our original motivation for studying covers of convex subsets by intervals was the following 40 year-old
conjecture.  The {\it width} of a partially ordered set $P$, denoted by $w(P)$, is the maximum size of an antichain in $P$.

\begin{conjecture} {\rm(Daykin and Frankl \label{C:convex}\cite{DF})}
For any nonempty convex subset $\C$ of  $\Bn$,\\
$$\frac{w(\C)}{|\C|}\ \ge \ \frac{\binom{n}{\lfloor n/2 \rfloor}}{2^n} \ .$$
\end{conjecture}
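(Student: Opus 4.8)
Write $p=\binom{n}{\lfloor n/2\rfloor}/2^n$. Since $w(\Bn)=\binom{n}{\lfloor n/2\rfloor}$ by Sperner's theorem and $|\Bn|=2^n$, the number $p$ is exactly $w(\Bn)/|\Bn|$, so the conjecture asserts that \emph{the whole cube minimizes the ratio $w(\C)/|\C|$ over all convex $\C$}, the cube itself being an equality case. The first thing I would record are two structural facts. Every convex $\C$ is $[A,B]$ for the antichains $A,B$ of its minimals and maximals, and each interval $[a,b]$ is a sub-Boolean-lattice $\cong\mathcal B(|b\setminus a|)$, on which the analogous ratio $\binom{h}{\lfloor h/2\rfloor}/2^{h}$ is $\ge p$ because it decreases in $h\le n$. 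Also, convexity forces $\C$ to meet every maximal chain, and every symmetric chain, of $\Bn$ in a \emph{contiguous} segment. Before attempting a proof I would pin down the extremal behaviour via \emph{staggered} families $\C=\bigsqcup_j[F_j,\,F_j\cup G_j]$ with the $F_j,G_j$ pairwise-disjoint nonempty subsets of $[n]$: the blocks are then pairwise totally incomparable, their rank-windows can be made disjoint, and $w(\C)=\sum_j\binom{|G_j|}{\lfloor|G_j|/2\rfloor}$ while $|\C|=\sum_j 2^{|G_j|}$, so $w(\C)/|\C|$ is a convexity-weighted average of sub-cube ratios, each $\ge p$.

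\textbf{Why the naive reductions fail.} This example is the key warning. Taking all $|G_j|=2$ and $r\approx\sqrt{2n/3}$ disjoint windows, one gets $\max_k|\C_k|=2$ while $p|\C|\approx p\cdot 4r>2$, so $\max_k|\C_k|<p|\C|$; hence the tempting reduction ``$w(\C)\ge\max_k|\C_k|$, then bound the largest level'' is \emph{false}, since convex subsets of $\Bn$ need not be Sperner and the required antichains genuinely cross levels (or blocks). A uniformly random maximal chain only yields $\max_k|\C_k|\ge|\C|/(n+1)$, too weak by a factor $\sim\sqrt n$ because $1/(n+1)<p$. So any viable argument must \emph{assemble} the extremal antichain from several levels, matching the middle-binomial constant.

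\textbf{Two lines of attack.} I would pursue linear-programming duality first. Comparability graphs are perfect, so $w(\C)$ equals the optimum of the fractional-antichain program: maximize $\sum_{x\in\C}\lambda_x$ over $\lambda_x\ge0$ with $\sum_{x\in K}\lambda_x\le1$ for every chain $K\subseteq\C$. It then suffices to exhibit weights with chain-sums at most $1$ and total at least $p|\C|$. Constant weights $\lambda_x\equiv p$ give total exactly $p|\C|$ but violate the chain constraint on long chains (length up to $n+1>1/p$); the design problem is to down-weight elements on long chains just enough, using convexity to ensure a long chain is always accompanied by enough lateral room to absorb the deficit. The second line is induction on $n$ via $\C_0=\{S\in\C:n\notin S\}$ and $\C_1=\{S\setminus\{n\}:S\in\C,\ n\in S\}$, both convex in $\mathcal B(n-1)$ with $|\C|=|\C_0|+|\C_1|$; here the inductive hypothesis controls $w(\C_0),w(\C_1)$ and one must reconstruct $w(\C)$, the Pascal identity $\binom{n}{\lfloor n/2\rfloor}=\binom{n-1}{\lfloor n/2\rfloor}+\binom{n-1}{\lfloor n/2\rfloor-1}$ being exactly the target constant.

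\textbf{The main obstacle.} In both routes the crux is the same, and is surely why the conjecture has resisted for forty years: lower-bounding the width. Convexity cheaply supplies chain covers and hence \emph{upper} bounds (intersecting a symmetric chain decomposition gives $w(\C)\le\binom{n}{\lfloor n/2\rfloor}$), but the conjecture needs a matching \emph{lower} bound that, by the staggered examples, cannot be read off any single level and must realize a convexity-weighted average of sub-cube widths. Concretely, the LP route stalls at constructing one feasible weighting that both beats $p|\C|$ and respects every long chain, while the inductive route stalls at combining $w(\C_0)$ and $w(\C_1)$: merely taking $\max(w(\C_0),w(\C_1))$ loses a factor near $2$, since $p_{n-1}/p_n$ equals $1$ for $n$ even and is only slightly larger for $n$ odd, so one needs a normalized-matching argument stitching antichains across the two halves to capture the Pascal gain. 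I expect that quantifying how convexity converts chain-length into antichain-width — equivalently, executing this combination step — is where the genuine difficulty lies.
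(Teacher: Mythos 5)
You have not proved the statement, and neither does the paper: what you were given is Conjecture \ref{C:convex} (Daykin--Frankl, 1983), which the paper presents explicitly as a forty-year-old \emph{open} problem and uses only as motivation for its interval-covering results. The paper's sole remark in the direction of a proof is the same one you make at the outset: since $f(k)=\binom{k}{\lfloor k/2\rfloor}/2^k$ is nonincreasing in $k$, the conjecture holds when $\C$ is a single interval of $\Bn$. Beyond that the authors claim nothing; their theorems concern covering convex sets $[A,B]$ by intervals, a line of study they say was \emph{prompted} by this conjecture, not a resolution of it. So there is no ``paper's own proof'' to compare against.

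Judged on its own terms, your proposal is a candid and essentially accurate map of the difficulty rather than a proof with a hidden flaw. Your preliminary observations check out: every convex set is $[A,B]$ for the antichains of its minimals and maximals; your staggered families $\bigsqcup_j[F_j,\,F_j\cup G_j]$ are indeed convex, with $w(\C)=\sum_j\binom{|G_j|}{\lfloor|G_j|/2\rfloor}$ and $|\C|=\sum_j 2^{|G_j|}$; and your example with $r\approx\sqrt{2n/3}$ rank-disjoint $2$-cubes correctly kills the naive plan of lower-bounding $w(\C)$ by the largest level, since there $\max_k|\C_k|=2$ while $p|\C|\approx 8/\sqrt{3\pi}>2$ for your $p=\binom{n}{\lfloor n/2\rfloor}/2^n$. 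Both of your proposed routes (LP duality via perfection of comparability graphs, and induction on $n$ by splitting $\C$ into $\C_0$ and $\C_1$) terminate exactly where you say they do: no feasible fractional antichain with total weight at least $p|\C|$ is constructed, and no mechanism is given for recombining $w(\C_0)$ and $w(\C_1)$ so as to capture the Pascal identity $\binom{n}{\lfloor n/2\rfloor}=\binom{n-1}{\lfloor n/2\rfloor}+\binom{n-1}{\lfloor n/2\rfloor-1}$. The gap is therefore total --- the required lower bound on width is never established --- but it is precisely the gap that has kept the conjecture open, and nothing in this paper would have closed it for you.
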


According to Sperner's theorem, the width of $\Bn$ is the size of a middle level, so the conjecture states that the ratio
of width to cardinality of convex subsets of $\Bn$ is minimized by the full lattice.

The function $f(k) = \binom{k}{\lfloor k/2\rfloor}/2^k$ is nonincreasing on the positive integers,
so the conjecture holds for those convex $\C$ that are intervals in $\Bn$.  This led us to ask if the ratio of
width to cardinality of a convex set is related to its covering by intervals.  We rediscovered work by Voigt and
Wegener \cite{VW} in the process.

They used coverings of convex subsets of $\Bn$ by intervals to construct minimal polynomials for symmetric 
 Boolean functions.  Their main tool for obtaining coverings is the  Greene-Kleitman 
symmetric chain decomposition of $\Bn$ \cite{GK}.  Later, Bouchemahk and Engel \cite{BE} provided a general
setting for studying interval covers and related covers to symmetric chain decompositions.  We first introduce the
tools from \cite{BE}, sketch the proof of the Voigt-Wegener result (Theorem \ref{T:Boolean} below) provided in \cite{BE} and then show how 
the Greene-Kleitman decomposition provides an explicit way to define interval covers.

We use $\prec$ to denote the {\it covering relation} in a finite partially ordered set: $x \prec y$ if $x < y$ and there is no $z$ such
that $x < z < y$.  In this case, we say that $y$ {\it covers}  $x$ and that $y$ is an {\it upper cover} of $x$; {\it covered by} and {\it lower
cover} have the obvious meanings.
We say that $P$ is  {\it ranked} if there is a function $r$ on $P$
satisfying $r(x) = 0$ for all minimal elements $x$ and $r(z) = r(y) +1$ whenever $y \prec z$ in $P$.  Call $r$ the
{\it rank function} of $P$, let the {\it rank} of $P$, $r(P)$, be the maximum of $r(x)$ for $x \in P$,  let
$$P_i = \{ x \in P : r(x) = i\} \ (i = 0, 1, \ldots, r(P)) \ \ \text{and} \ \ P_{j,k} = \bigcup_{i =j}^k P_i  \ (0 \le j \le k \le r(P)).$$
Call $P_i$ the $i^{\mathrm{th}}$ {\it level} of $P$, $r_i = |P_i|$, the $i^{\mathrm{th}}$ {\it rank number} or {\it level number}.
Also, notice that $P_{j,k} = [P_j, P_k]$ and that $r(P_{j,k}) = k - j$. 

Following \cite{BE}, a {\it covering by intervals of} $P$ is a family of intervals whose union is $P$ and
$\rho(P)$ is the minimum number of intervals in such a family.   Note that $\rho(P_{j,k}) \ge \max \{r_j, r_k\}$ always holds. 
Say that $P$ has the $(j, k)$-{\it interval covering property} (or  the $(j, k)$-{\it ICP}) if
$$\rho(P_{j,k}) = \max \{r_j, r_k\}$$
and that $P$ has the {\it strong interval covering property}  (or the {\it strong ICP}) if $P$ has the $(j,k)$-ICP for all $j, k$ with
$0 \le j \le k \le r(P).$  

Note that with this terminology Theorem \ref{T:distributive} states that every finite distributive lattice of rank $n$ ($n \ge 2$) has 
the $(1, n-1)$-ICP.

 Recall that a chain $C = \{c_0, c_1, \ldots, c_m\}$ in a ranked partially ordered set $P$ is called 
a {\it symmetric chain} provided $c_i \prec c_{i+1}$ ($i = 0, 1, \ldots, m-1$) and $r(c_0) + r(c_m) = r(P)$.
A family of symmetric chains $\mathcal{C}(P)$ that partitions $P$ is a {\it symmetric chain decomposition (SCD)} of
$P$ and $P$ is a {\it symmetric chain order (SCO)}.   If $P$ has an SCD $\mathcal{C}(P)$ such that
for all $C \in \mathcal{C}(P)$ of less than maximum length there exists $D \in \mathcal{C}(P)$ such that
  \begin{equation*}
     (\mathbf{\star})\  \  \min D  \prec \min C \le \max C \prec \max D
  \end{equation*}
then, following \cite{BE}, call $P$ a {\it special symmetric chain order (SSCO)} and $\mathcal{C}(P)$ a {\it special
symmetric chain decomposition (SSCD)}. 

Here is a restatement of the Voigt-Wegener result for $\Bn$ .

\begin{theorem}\label{T:Boolean}(\cite{VW}, cf. \cite{BE})
The Boolean lattice $\Bn$ has the strong interval cover property.
\end{theorem}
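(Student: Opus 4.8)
The plan is to route Theorem \ref{T:Boolean} through the two facts that the framework of \cite{BE} is designed to separate: first, a general lemma that \emph{every special symmetric chain order has the strong ICP}, and second, that the Greene--Kleitman decomposition \cite{GK} makes $\Bn$ into an SSCO. Granting both, the theorem is immediate. I would prove the lemma first, since it is the part carrying the combinatorial content, and treat the SSCO-ness of $\Bn$ as a structural verification.

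For the general lemma, fix levels $0\le j\le k\le r(P)$ and an SSCD $\C$ of $P$. Since a symmetric chain order is rank-symmetric and rank-unimodal, for $j\le k$ one has $r_j\le r_k$ exactly when $j+k\le r(P)$; replacing $P$ by its order dual (again an SSCO, as the condition $(\mathbf{\star})$ is self-dual) and $(j,k)$ by $(n-k,n-j)$ if necessary, I would assume we are in this case, so that $\max(r_j,r_k)=r_k$ and, crucially, every symmetric chain meeting level $j$ also meets level $k$. For each of the $r_k$ chains $C$ that meet level $k$, keep the interval $\mathrm{band}(C):=C\cap P_{j,k}$ (a subchain of $C$, hence an interval of $P$ lying inside the convex set $P_{j,k}$). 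These are $r_k=\max(r_j,r_k)$ distinct intervals, so the entire task is to show their union exhausts $P_{j,k}$. As $\C$ partitions $P$, it suffices to absorb the band portion of each chain $C'$ that meets the band but \emph{not} level $k$.

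This is where $(\mathbf{\star})$ is used. Such a $C'$ is short; because $j+k\le r(P)$, a chain meeting level $j$ would also meet level $k$, so $\min C'$ lies strictly above level $j$ and $\max C'$ strictly below level $k$, i.e. $C'$ sits inside the open band. Its $(\mathbf{\star})$-partner $D$ sits one level below $\min C'$ and one level above $\max C'$, hence $D$ too lies within the band and $\mathrm{band}(D)=D=[\min D,\max D]$. The two covering relations $\min D\prec\min C'\le\max C'\prec\max D$ give $\min D\le x\le\max D$ for every $x\in C'$, so $\mathrm{band}(C')=C'\subseteq\mathrm{band}(D)$. If $D$ still does not reach level $k$ I would iterate, replacing $C'$ by $D$; each step raises the top level by one, so after finitely many steps $\mathrm{band}(C')$ is contained in $\mathrm{band}(C)$ for some kept chain $C$ reaching level $k$. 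This proves the lemma. For the SSCO-ness of $\Bn$, I would induct on $n$ using the de Bruijn--Tengbergen--Kruyswijk construction that builds the decomposition of $\Bn$ from that of $\B(n-1)$: each chain $x_0\prec\cdots\prec x_m$ of $\B(n-1)$ spawns the long chain $x_0\prec\cdots\prec x_m\prec x_m\cup\{n\}$ and, when $m\ge1$, the short chain $x_0\cup\{n\}\prec\cdots\prec x_{m-1}\cup\{n\}$; since $x_0\prec x_0\cup\{n\}$ and $x_{m-1}\cup\{n\}\prec x_m\cup\{n\}$, the long sibling is exactly a witness of $(\mathbf{\star})$ for the short one, while chains special in $\B(n-1)$ remain special. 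Hence $\Bn$ is an SSCO, and the lemma finishes the proof.

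I expect the crux to be the framing rather than any single computation: one must (i) make the duality reduction so that intervals are always extended toward the \emph{heavier} boundary level, which is precisely what guarantees that a $(\mathbf{\star})$-partner of an interior band chain stays inside the band and hence needs no trimming, and (ii) match the interval count to $\max(r_j,r_k)$ by observing that this equals the number of chains meeting the heavier level. Once these are in place, the containments follow formally from the covering relations in $(\mathbf{\star})$. On the $\Bn$ side the only real care is in correctly pairing each newly created short chain with its long sibling; the recursive construction hands this to us, so I anticipate no serious obstacle there.
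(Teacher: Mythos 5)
Your proposal is correct, but it distributes the work differently than the paper does. The paper's argument is partly by citation: it quotes Theorem 5 of \cite{BE} (a ranked poset with an SSCD has the strong ICP) without proof, shows that the Greene--Kleitman decomposition $\C^{\star}(\Bn)$ of \cite{GK} is an SSCD by analyzing the bracketing/pairing procedure (the partner map $\phi$ unpairs the rightmost pair $(j,i)$ and passes to the chain of $X-\{i\}$ and $Y\cup\{j\}$), and then writes out a fully explicit minimum cover of $\Bn_{j,k}$ by intervals $[\psi(Y),Y]$ whose endpoints lie exactly on levels $j$ and $k$, with $\psi$ defined by iterating $\phi$. You do the opposite division of labour: you prove the general covering lemma from scratch (duality reduction to $j+k\le r(P)$, then absorption of each chain that enters the band but misses level $k$ into the interval spanned by an iterated $(\mathbf{\star})$-partner), and you certify that $\Bn$ is an SSCO by induction through the de Bruijn--Tengbergen--Kruyswijk recursion, pairing each short chain with its long sibling, rather than through the bracketing calculus. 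Your route is more self-contained --- the key lemma is proved rather than cited, and in full SSCO generality --- and the inductive verification of $(\mathbf{\star})$ is shorter and easier to check than the pairing argument; the paper's route buys an explicit, algorithmic cover whose interval endpoints sit on the two prescribed levels, which is the form that transfers directly to products of chains. Two small repairs to your write-up: $\mathrm{band}(C)=C\cap P_{j,k}$ is a saturated chain, not an interval of $P$ (in $\B(2)$ the chain $\emptyset\prec\{1\}\prec\{1,2\}$ omits $\{2\}$), so the kept objects must be the intervals $[\min \mathrm{band}(C),\max \mathrm{band}(C)]$, which lie in $P_{j,k}$ by convexity --- your computation $\min D\prec\min C'\le\max C'\prec\max D$ is exactly the proof that an absorbed chain lands in such an interval, so nothing else changes; and ``$r_j\le r_k$ exactly when $j+k\le r(P)$'' is not a true biconditional (all rank numbers may coincide), though the one implication you actually use is correct and is all that the duality reduction requires.
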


Theorem \ref{T:Boolean} can be obtained from two results in \cite{BE}: a ranked partially ordered set $P$ with an SSCD
has the strong ICP (\cite{BE}, Theorem 5); and, if ranked partially ordered sets $P$ and $Q$ are both SSCO's then $P \times Q$
is an SSCO (\cite{BE}, Theorem 6).  Notice that these results show that any product of chains is an SSCO and has the strong ICP 
(\cite{BE}, Example 2).  So, we know that $\Bn$ is an SSCO.  In fact, the Greene-Kleitman SCD of $\Bn$, which we will
denote by $\C^{\star}(\Bn)$, is an SSCD.  This follows directly from the construction of this SCD, as we shall see below.
We note that Griggs, Killian and Savage \cite{GKS} give a detailed proof of this in their investigations of symmetric Venn diagrams.

Regard the subsets $A$ of $[n]$ as binary sequences of length $n$, say $A = a_1 a_2 \cdots a_n$ where $a_i = 1$
if and only if  $i \in A$.  Coordinates or positions are categorized as follows.  Scan from left to right.  If positions 1 through
$i-1$ are scanned and $a_i = 0$ then $i$ is unpaired (for now).  If $a_i = 1$ then pair $i$ with the rightmost position, say $j$,
such that $a_j = 0$ is unpaired, if such $j$ exists.  Now $i$ and $j$ are paired.  If no such $j$ exists, $i$ is unpaired.
In the end, we have the unpaired positions containing 0, say $U_0(A)$, the unpaired positions containing 1, say $U_1(A)$, and
the set $P(A) = \{(j, i) : j < i,  a_j = 0, a_i = 1 \ \text{and} \ i, j \ \text{are paired}\}.$  The symmetric chain containing $A$
is obtained by: (1) flipping the 0's indexed by $U_0(A)$ to 1's (from left to right), giving the members of the chain larger than $A$; 
and, (2) flipping the 1's indexed by  $U_1(A)$ to 0's (from right to left), filling out the chain below $A$.  The chain decomposition
obtained has these properties:

\begin{enumerate}

  \item for all $B \in \Bn$, $B$ is in the chain containing $A$ if and only if $P(B) = P(A)$;\\
  
  \item the minimum element $X$ of the chain has $U_1(X) = \emptyset$ and the maximum
  element $Y$ has $U_0(Y) = \emptyset$; and,\\
  
  \item the length of the chain is $U_0 (A) + U_1 (A)$.

\end{enumerate}
Let $\C =  \C^{\star}(\Bn)$ for brevity.
We now define a function $\phi$ that maps each $C \in \C$, of length less than $n$, to some $D \in \C$
such that $(\mathbf{\star})$ holds.  Let $\min C = X$ and $\max C = Y$.  By {\bf(1)} and {\bf(3)}, $P(X) = P(Y) \ne \emptyset$
so we choose the $(j, i) \in P(X)$ such that $i$ is maximum among all pairs in $P(X)$.  Let $X' = X -\{i\}$, $Y' = Y \cup \{j\}$.
By the choice of $(j, i)$ as the rightmost paired coordinates of $X$ and $Y$,  $P(X') = P(X) -\{(j, i)\} = P(Y) -\{(j, i)\} = P(Y')$. 
By {\bf (2)}, $X'$ is the minimum element of its chain in $\C$,
$Y'$ is the maximum element of its chain in $\C$, and, by {\bf (1)}, $X'$ and $Y'$ are in the same chain, say $D$.
Let $\phi(C) = D$ and note that $C$ and $D$ satisfy $(\mathbf{\star})$.

Let $0 \le j \le k \le n$ and $\B = \Bn_{j, k}$ be the convex subset of $\Bn$ of all elements between levels $j$ and $k$.  We
may assume without loss of generality that $j + k \le n$, so $\binom{n}{j} \le \binom{n}{k}$.  We need to define a function $\psi$
of level $k$ of $\Bn$ to level $j$ such that $\B = \cup_{|Y| = k} [\psi(Y), Y]$.   

For $Y \in \Bn$ of cardinality $k$, 
let $Y \in C \in \C$.  If $C$ contains an element $X$ of cardinality $j$ then let $\psi(Y) = X$.  Otherwise, $|\min C| > j$, say
$|\min C| = j + m$.  Since $|\min \phi(C)| = |\min C| - 1$, we see that $|\min \phi^m(C)| = j$. In this case, let $\psi(Y) = \min \phi^m(C)$.

Let $Z \in \B$ with $Z \in C \in \C$.  If $|\max C| \ge k$, let $Y \in C$ with $|Y| = k$.  Then $|\psi(Y)| = j \le |Z|$ and either $\psi (Y) \in C$
or $\psi (Y) \subseteq \min C$.  In either case $\psi(Y) \subseteq Z$ so $Z \in [\psi(Y), Y]$.   If $|\max C| < k$ then since $0 \le j \le k$ and
$j + k \le n$, 
$$|\max C| = k - s, |\min C| = j + t, \ \text{with} \ t \ge s.$$
We have that $|\max \phi^s(C)| = k$, $|\min \phi^t (C)| = j$,  and 
$$\min \phi^t(C) \subseteq \min C \subseteq Z \subseteq \max C \subseteq \max \phi^s(C).$$
Since $\phi^t (C) = \phi^{t-s}(\phi^s(C))$, $\psi(\max \phi^s(C)) = \min \phi^t (C)$, so $Z \in [\psi(\max \phi^s(C)),  \max \phi^s(C)]$.

This description of the cover by a minimum possible number of intervals in $\Bn$ can be generalized to any finite product of
finite chains because the pairing procedure for $\Bn$ that gives the SSCD has an analog for arbitrary products of chains (see
\cite{E}, Example 5.1.1).

\section{Convex Subsets of Finite Distributive Lattices}\label{S:distributive}

We begin this section with a review of the Birkhoff duality between finite 
partially ordered sets and finite distributive lattices.  This requires some
terminology and notation.

Let $P$ be a partially ordered set. Let 
${\mathcal D}={\mathcal O}(P)$ be the distributive lattice of order ideals of 
$P$ ordered by containment.  Then $P$ is isomorphic to the set $J({\mathcal D})$ of 
join-irreducibles of ${\mathcal D}$, which corresponds to the set 
$\{\downarrow\!\!x:x\in P\}$ of all principal order ideals of $P$. Note that $P$ is also 
isomorphic to the set $M({\mathcal D})$ of meet-irreducibles of 
${\mathcal D}$, which corresponds to the set $\{P-\uparrow\!\!x:x\in P\}$. 

For $a\in P$, let 
$${\bf a}=\downarrow\!\!a\in J({\mathcal D}),\quad{\bf\overline{a}}
=P-\uparrow\!\!a\in M({\mathcal D}).$$

For a subset $A$ of $P$, we define
$${\mathcal A}=\{{\bf a}\;|\;a\in A\}\subseteq J({\mathcal D}),\quad\overline
{\mathcal A}=\{\overline{\bf a}\;|\;a\in A\}\subseteq M({\mathcal D}).$$
Then ${\mathcal A}\cong A\cong\overline{\mathcal A}$. 

For subsets $A,B$ of a poset $P$, write $A\not\ge B$ to mean that: for all 
$a\in A$ there is $b\in B$ so that $a\not\ge b$, and for all $b\in B$ there is 
$a\in A$ so that $a\not\ge b$. This is equivalent to: 
$B\not\subseteq\downarrow\!\!a$ for any $a\in A$ and 
$A\not\subseteq\uparrow\!\!b$ for any $b\in B$. [{\it Caution}: $A\not\ge B$ 
is not the negation of $B\le A$!].  

\begin{lemma}\label{L:orders2}
Let $A,B$ be subsets of a partially ordered set $P$.  

(i) If $A$ has no minimum, $B$ has no maximum, and $A\le B$, 
then $A\not\ge B$.

(ii) $A\not\ge B$ if and only if ${\mathcal A}\le\overline{\mathcal B}$ in 
${\mathcal D}={\mathcal O}(P)$.
\end{lemma}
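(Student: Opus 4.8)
The plan is to treat the two parts separately, with part (ii) a direct unwinding of the Birkhoff correspondence and part (i) a pair of short symmetric contradiction arguments.

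For part (ii), the key computation I would isolate first is the pointwise statement: for $a,b\in P$, the inequality $\mathbf{a}\le\overline{\mathbf{b}}$ in $\mathcal{D}$ --- which by definition is $\downarrow a\subseteq P-\uparrow b$ --- is equivalent to $a\not\ge b$ in $P$. To see this, note that $\downarrow a\subseteq P-\uparrow b$ holds iff $\downarrow a\cap\uparrow b=\emptyset$, and $\downarrow a\cap\uparrow b$ is nonempty precisely when some $x$ satisfies $b\le x\le a$, i.e.\ exactly when $b\le a$. Once this pointwise equivalence is in hand, I would expand both sides of the claimed equivalence using the definition of $\le$ on subsets: $\mathcal{A}\le\overline{\mathcal{B}}$ asserts that every $\mathbf{a}$ lies below some $\overline{\mathbf{b}}$ and every $\overline{\mathbf{b}}$ lies above some $\mathbf{a}$. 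Translating each clause through the pointwise equivalence gives exactly ``for all $a\in A$ there is $b\in B$ with $a\not\ge b$, and for all $b\in B$ there is $a\in A$ with $a\not\ge b$'', which is the definition of $A\not\ge B$. Thus (ii) is definition-chasing anchored on the single pointwise fact.

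For part (i), I would verify the two defining clauses of $A\not\ge B$ separately, each by contradiction. For the first clause, fix $a\in A$; by $A\le B$ choose $b\in B$ with $a\le b$, and suppose $B\subseteq\downarrow a$. Then $b\le a$, so $a=b\in B$, and since every element of $B$ is $\le a=b$, the element $b$ is a maximum of $B$, contradicting the hypothesis that $B$ has no maximum. The second clause is dual: fix $b\in B$, choose $a\in A$ with $a\le b$, and if $A\subseteq\uparrow b$ then $a\ge b$, forcing $a=b$ and making $a$ a minimum of $A$, contradicting that $A$ has no minimum.

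The arguments are short, so I do not expect a serious obstacle; the only points requiring care are the bookkeeping between the two equivalent formulations of $A\not\ge B$ recorded in the text, and the orientation of the complement in $\overline{\mathbf{b}}=P-\uparrow b$, so that the inequality in $\mathcal{D}$ comes out as $a\not\ge b$ rather than its converse. To keep the reasoning transparent I would state the pointwise equivalence $\mathbf{a}\le\overline{\mathbf{b}}\iff a\not\ge b$ explicitly before deducing (ii), and then present (i) as the two one-line contradictions above.
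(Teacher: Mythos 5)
Your proposal is correct and follows essentially the same route as the paper: part (ii) in both cases reduces to the pointwise equivalence $\mathbf{a}\le\overline{\mathbf{b}}$ in $\mathcal{D}$ iff $a\not\ge b$ in $P$ (you isolate it as an explicit preliminary fact, the paper verifies it inline in each direction), and your part (i) is just the contrapositive packaging of the paper's direct argument, which likewise uses $A\le B$ to produce a comparable element and then invokes the no-minimum/no-maximum hypotheses to rule out the case $a=b$.
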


\begin{proof}
(i) Suppose that $A$ has no minimum, $B$ has no maximum, and 
$A\le B$. Let $a\in A$; then there is $b\in B$ so that $a\le b$, so 
$a\not\ge b$ unless $a=b$. But if $a=b$, since $b$ is not the maximum of $B$ 
there is $b'\in B$ with $b\not\ge b'$, and thus $a\not\ge b'$. Similarly, let 
$b\in B$; then there is $a\in A$ so that $a\le b$, so $a\not\ge b$ unless 
$a=b$. But if $a=b$, since $a$ is not the minimum of $A$, there is $a'\in A$ 
with $a'\not\ge a$, and thus $a'\not\ge b$. Hence $A\not\ge B$.

(ii) Assume that $A\not\ge B$ in $P$. In ${\mathcal D}$, let 
${\bf a}\in{\mathcal A}$, which corresponds to $\downarrow\!\!a$ for an 
element $a\in A$. Then there exists $b\in B$ so that $a\not\ge b$. Thus 
$a\in P-\uparrow\!\!b$, so ${\bf a}\le\overline{\bf b}$ in ${\mathcal D}$, 
where $\overline{\bf b}\in\overline{\mathcal B}$. Similarly, let 
$\overline{\bf b}\in\overline{\mathcal B}$, which corresponds to 
$P-\uparrow\!\!b$ for an element $b\in B$ in $P$. Then there is $a\in A$ so 
that $a\not\ge b$, so $a\not\in\uparrow\!\!b$, so ${\bf a}\le\overline{\bf b}$ 
in ${\mathcal D}$. 
Thus  ${\mathcal A}\le\overline{\mathcal B}$ in ${\mathcal D}$.

Conversely, assume that ${\mathcal A}\le\overline{\mathcal B}$ in 
${\mathcal D}$. In $P$, let $a\in A$; then ${\bf a}\in{\mathcal A}$ in 
${\mathcal D}$, and ${\bf a}\le\overline{\bf b}$ for some 
$\overline{\bf b}\in\overline{\mathcal B}$, so $a\not\ge b$ in $P$. Similarly, 
let $b\in B$; then $\overline{\bf b}\in\overline{\mathcal B}$ and 
${\bf a}\le\overline{\bf b}$ for some ${\bf a}\in{\mathcal A}$, 
which means $a\not\ge b$ in $P$. Thus $A\not\ge B$ in $P$.
\end{proof}

{\bf Note 1:} In (i), the condition that $A$ have no minimum element cannot be 
dropped. For example, let $P=\{x,y,z\}$ where $x<y$, $x<z$ and $y,z$ are incomparable, and let 
$A=B=P$. Then $A\le B$, but $A\not\ge B$ fails since $x\in B$ but there is no 
$a\in A$ so that $a\not\ge x$. A similar example shows that $B$ must have no 
maximum element.\\[-.2cm]

{\bf Note 2:}  By (ii), $A \ngeq B$ is equivalent to  $[{\mathcal A},\overline{\mathcal B}]$ being
the convex hull of $\mathcal{A} \cup \overline{\mathcal{B}}$.

\begin{proposition}\label{P:duality}
Let $A,B$ be antichains of $P$ with $|A|\le|B|$ 
and so that $A\not\ge B$ in $P$. The following are equivalent:

(i) There exists an onto function $f:B\to A$ so that, for any order ideal $X$ of 
$P$ satisfying $A\cap X\ne\emptyset$ and $B\not\subseteq X$, 
there is some $b\in B-X$ with $f(b)\in X$.

(ii) In ${\mathcal D} = \mathcal{O}(P)$, $[{\mathcal A},\overline{\mathcal B}]$ is the union of 
$|B|$ intervals $[{\bf a},\overline{\bf b}]$ with ${\bf a}\in{\mathcal A}$ and 
$\overline{\bf b}\in\overline{\mathcal B}$.
\end{proposition}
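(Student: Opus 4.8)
The plan is to set up a \emph{dictionary} converting lattice inclusions in $\mathcal{D}=\mathcal{O}(P)$ back into membership statements in $P$, and then to exploit the rigidity of the join- and meet-irreducibles. First I would record the elementary equivalences, valid for any order ideal $X$ of $P$ and any $a\in A$, $b\in B$: since $X$ is a downset, $\mathbf{a}=\downarrow a\subseteq X$ holds iff $a\in X$, and $X\subseteq\overline{\mathbf{b}}=P-\uparrow b$ holds iff $b\notin X$ (because $b\notin X$ is equivalent to $X\cap\uparrow b=\emptyset$ for a downset $X$). Hence $X\in[\mathbf{a},\overline{\mathbf{b}}]$ iff $a\in X$ and $b\notin X$, and so $X\in[\mathcal{A},\overline{\mathcal{B}}]$ iff $A\cap X\ne\emptyset$ and $B\not\subseteq X$. (Note~2, via Lemma~\ref{L:orders2}(ii), guarantees $[\mathcal{A},\overline{\mathcal{B}}]$ is a genuine convex hull, since $A\not\ge B$.) Under this dictionary the covering clause of (i) reads \emph{precisely} as the assertion that the intervals $[\mathbf{f(b)},\overline{\mathbf{b}}]$, $b\in B$, cover $[\mathcal{A},\overline{\mathcal{B}}]$.

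Next I would establish two rigidity observations. For each $b\in B$, the order ideal $\overline{\mathbf{b}}$ itself lies in $[\mathcal{A},\overline{\mathcal{B}}]$: from $A\not\ge B$ we get $a\in A$ with $a\not\ge b$, so $a\in\overline{\mathbf{b}}$, while $b\notin\overline{\mathbf{b}}$; and because $B$ is an antichain, $\overline{\mathbf{b}}\subseteq\overline{\mathbf{b}'}$ forces $b'=b$, so the only intervals that can contain $\overline{\mathbf{b}}$ are those with top $\overline{\mathbf{b}}$. Dually, each $\mathbf{a}=\downarrow a$ lies in $[\mathcal{A},\overline{\mathcal{B}}]$ (using $A\not\ge B$ to find $b\in B$ with $a\not\ge b$, i.e. $b\notin\downarrow a$), and since $A$ is an antichain it is contained only in intervals with bottom $\mathbf{a}$.

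For (ii)$\Rightarrow$(i): given a cover by $|B|$ intervals $[\mathbf{a},\overline{\mathbf{b}}]$, the first observation forces at least one interval with top $\overline{\mathbf{b}}$ for each of the $|B|$ distinct values $b$, hence exactly one; its bottom defines $f(b)\in A$, and the cover is $\{[\mathbf{f(b)},\overline{\mathbf{b}}]:b\in B\}$. The second observation forces each $\mathbf{a}$ to be covered, i.e. $f(b)=a$ for some $b$, so $f$ is onto. The dictionary then converts the covering property directly into the clause of (i). For (i)$\Rightarrow$(ii): I would form the $|B|$ intervals $[\mathbf{f(b)},\overline{\mathbf{b}}]$, which are distinct because $b\mapsto\overline{\mathbf{b}}$ is injective. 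I first check each is a legitimate nonempty interval, i.e. $f(b)\not\ge b$: applying the clause of (i) to $X=\overline{\mathbf{b}}$ yields $b'\in B-\overline{\mathbf{b}}$ with $f(b')\in\overline{\mathbf{b}}$; here $b'\ge b$ forces $b'=b$ since $B$ is an antichain, and $f(b)\in\overline{\mathbf{b}}$ says exactly $f(b)\not\ge b$. The covering then follows from the dictionary as above.

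The real content is concentrated in the dictionary together with the two rigidity facts, so that is where I expect the only genuine thought to lie: recognizing that the irreducibles $\mathbf{a}$ and $\overline{\mathbf{b}}$ pin any minimum cover down to one interval per $b\in B$ (explaining why $|B|$ is the governing count) and force $f$ to be onto. Once these are in hand both implications are short. The one subtlety to watch is that (i) \emph{automatically} guarantees $f(b)\not\ge b$, which is exactly what makes the constructed intervals in (i)$\Rightarrow$(ii) legitimate rather than empty.
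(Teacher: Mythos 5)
Your proof is correct and follows essentially the same route as the paper's: the explicit dictionary between lattice containments in $\mathcal{O}(P)$ and membership statements in $P$, the legitimacy of each interval $[\mathbf{f(b)},\overline{\mathbf{b}}]$ obtained by applying (i) to the order ideal $X=P-\uparrow\! b$ (using that $B$ is an antichain), and the antichain/endpoint pigeonhole argument to extract the surjection $f$ from a cover. The only difference is organizational: you isolate the ``rigidity'' facts up front, whereas the paper folds them into the two implications.
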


\begin{proof} 
Since $A \ngeq B$, {\bf Note 2} shows that $[{\mathcal A},\overline{\mathcal B}]$ is the
convex hull of ${\mathcal A} \cup \overline{\mathcal B}$, setting the stage for (ii) to hold.

Suppose that (i) holds.  Were $|B| = 1$, (ii) would follow immediately, so assume
$B=\{b_1,b_2,\ldots,b_n\}$ where 
$n\ge 2$. For each $i$, let 
$a_i=f(b_i)\in A$ and $X_i=P-\uparrow\!\!b_i$. Then $X_i$ is a order ideal of $P$ 
(actually, $X_i=\overline{\bf b}_i$ in ${\mathcal D}$). Since $B$ is an 
antichain, $B-X_i=\{b_i\}$ and, since $A\not\ge B$, there is some $a\in A$ so 
that $a\not\ge b_i$ and thus $a\in X_i$. By (i), $a_i=f(b_i)\in X_i$, so 
${\bf a}_i\le  X_i=\overline{\bf b}_i$ in ${\mathcal D}$, and hence 
$[{\bf a}_i,\overline{\bf b}_i]$ is an interval in ${\mathcal D}$ for all $i$. 

We claim that $[{\mathcal A},\overline{\mathcal B}]=\bigcup_{i=1}^n[{\bf a}_i,
\overline{\bf b}_i]$. Let $X \in[{\mathcal A},\overline{\mathcal B}]$,
which means that 
${\bf a}\le X \le\overline{\bf b}$ for some ${\bf a}\in{\mathcal A},
\overline{\bf b}\in\overline{\mathcal B}$.  Then $a\in A\cap X$ 
and $b\in B-X$ in $P$, so by (i) there must be $b_i\in B-X$ so that 
$a_i=f(b_i)\in X$. Therefore ${\bf a}_i\le  X \le\overline{\bf b}_i$ in 
${\mathcal D}$, which proves the claim and thus (ii).

Conversely, suppose that (ii) holds, say 
$[{\mathcal A},\overline{\mathcal B}]
=\bigcup_{i=1}^n [{\bf a}_i,\overline{\bf b}_i]$. Since $A$ and $B$ are 
antichains, so are ${\mathcal A}$ and $\overline{\mathcal B}$ (in 
${\mathcal D}$), and so every element of ${\mathcal A}$ and 
$\overline{\mathcal B}$ must occur as an endpoint of at least one of these 
intervals. Thus $\overline{\mathcal B}
=\{\overline{\bf b}_1,\overline{\bf b}_2,\ldots,\overline{\bf b}_n\}$, and 
$f$ defined by $f(b_i)=a_i$ for all $i$ is a surjection of $B$ onto $A$. Let 
$X$ be a order ideal of $P$ satisfying $A\cap X\ne\emptyset$ and 
$B\not\subseteq X$. Then there exists $a\in A\cap X$ and $b\in B-X$, which means 
that ${\bf a}\le X \le\overline{\bf b}$ in ${\mathcal D}$. Thus
$X \in[{\mathcal A},\overline{\mathcal B}]$, and by (ii) there is 
${\bf a}_i\in{\mathcal A}$ and 
$\overline{\bf b}_i\in\overline{\mathcal B}$ so that 
$X \in[{\bf a}_i,\overline{\bf b}_i]$ . Then in $P$, $b_i\in B-X$ and 
$f(b_i)=a_i\in X$, so (i) holds.
\end{proof}

{\bf Note 3:} The condition that $A$ and $B$ are antichains is required. For 
example, let $P$ be the 4-element poset 

\begin{picture}(10,60)(-50,0)
\put(0,0){\line(0,1){20}}
\put(20,0){\line(0,1){20}}
\put(20,0){\line(-1,1){20}}
\put(0,0){\circle*{3}}
\put(0,20){\circle*{3}}
\put(20,0){\circle*{3}}
\put(20,20){\circle*{3}}
\put(-13,0){$a_1$}
\put(-13,20){$b_2$}
\put(25,0){$b_1$}
\put(25,20){$a_2$}
\put(50,10){for which ${\mathcal D}={\mathcal O}(P)$ is}
\multiput(240,-30)(20,20){3}{\line(-1,1){20}}
\multiput(240,10)(20,20){2}{\line(-1,1){20}}
\multiput(240,-30)(-20,20){2}{\line(1,1){40}}
\put(220,30){\line(1,1){20}}
\multiput(240,-30)(-20,20){2}{\multiput(0,0)(20,20){3}{\circle*{3}}}
\put(207,-13){$a_1$}
\put(207,27){$b_2$}
\put(265,-13){$b_1$}
\put(285,7){$a_2$}
\put(202,7){$a_1\vee b_1$}
\put(265,30){$a_1\vee a_2$}
\put(310,10){$.$}
\end{picture}

\vspace{40pt}\noindent
Let $A=\{a_1,a_2\}$ and $B=\{b_1,b_2\}$ and note that $A\not\ge B$ holds but 
that $B$ is not an antichain. Also, $X=\{a_1,b_1\}$ forces $f(b_2)=a_1$, while 
$X=\{a_2,b_1\}$ forces $f(b_2)=a_2$. So no such onto function $f:B\to A$ is 
possible, and (i) fails. However, in the corresponding 
distributive lattice ${\mathcal D}$, we get that ${\mathcal A}=\{a_1,a_2\}$ 
and $\overline{\mathcal B}=\{a_1,a_1\vee a_2\}$, and 
$$[{\mathcal A},\overline{\mathcal B}]=\{a_1,a_2,a_1\vee b_1,a_1\vee a_2\}
=[a_1,a_1\vee a_2]\cup[a_2,a_1\vee a_2]$$ 
is the union of $|B|=2$ intervals. Thus (ii) holds. Switching the roles of $A$ 
and $B$ gives an example showing $A$ must be an antichain.\\[-.2cm]

{\bf Note 4:} Unfortunately, conditions (i) and (ii) of Proposition \ref{P:duality} need not hold for 
all antichains $A,B$ satisfying the given conditions. For example, let $P$ be 
the 4-element poset 

\begin{picture}(10,60)(-50,0)
\put(0,0){\line(0,1){20}}
\put(20,0){\line(0,1){20}}
\put(0,0){\circle*{3}}
\put(0,20){\circle*{3}}
\put(20,0){\circle*{3}}
\put(20,20){\circle*{3}}
\put(-13,0){$a_1$}
\put(-13,20){$b_2$}
\put(25,0){$b_1$}
\put(25,20){$a_2$}
\put(50,10){for which ${\mathcal D}={\mathcal O}(P)$ is}
\multiput(240,-30)(20,20){3}{\line(-1,1){40}}
\multiput(240,-30)(-20,20){3}{\line(1,1){40}}
\multiput(240,-30)(-20,20){3}{\multiput(0,0)(20,20){3}{\circle*{3}}}
\put(207,-13){$a_1$}
\put(187,7){$b_2$}
\put(265,-13){$b_1$}
\put(285,7){$a_2$}
\put(211,14){$a_1\vee b_1$}
\put(265,30){$a_1\vee a_2$}
\put(310,10){$.$}
\end{picture}

\vspace{40pt}\noindent
Let $A=\{a_1,a_2\}$ and $B=\{b_1,b_2\}$ and note that $A\not\ge B$ holds. 
However, $X=\{a_1,b_1\}$ forces $f(b_2)=a_1$, while $X=\{a_1,b_2\}$ forces 
$f(b_1)=a_1$. So no 
such onto function $f:B\to A$ is possible. Looking at the corresponding 
distributive lattice ${\mathcal D}$, we get that ${\mathcal A}=\{a_1,a_2\}$ 
and $\overline{\mathcal B}=\{b_2,a_1\vee a_2\}$, and 
$[{\mathcal A},\overline{\mathcal B}]=\{a_1,a_2,a_1\vee b_1,a_1\vee a_2,b_2\}$ 
is not the union of two intervals. Thus both (i) and (ii) fail.

\section{Proofs of Theorems \ref{T:distributive} and \ref{T:poset}}\label{S:Proof} 

{\it Proof of Theorem 2.}  Recall that $A,B$ are antichains in $P$ with 
$|A|\le|B|$ and $A\le B$ in $P$. We aim to prove there exists an onto 
function $f:B\to A$ so that, for any order ideal $X$ of 
$P$ satisfying $A\cap X\ne\emptyset$ and $B\not\subseteq X$, 
there is some $b\in B-X$ with $f(b)\in X$.

 First note that the result is trivially true if 
$|A|=1$, so we assume that $|A|\ge 2$. 
To obtain a contradiction, suppose that $P$ is a partially ordered set of 
minimum cardinality for which the theorem is false and, among these 
counterexamples, take $P$ to have a minimum number of comparabilities. We 
now constrain the structure of $P$ to obtain a contradiction. \\[-.2cm]

{\bf Claim 1:}  $P=A\cup B$. 

Otherwise, suppose that $A\cup B\subset P$, 
so that the theorem is true for the proper subposet $P'=A\cup B$.
Let $f:B\to A$ be the function given by the theorem in that 
case. Looking now at $P$, let $X$ be an order ideal of $P$ satisfying 
$A\cap X\ne\emptyset$ and $B\not\subseteq X$. Then $X'=X\cap P'$ is a order ideal 
of $P'$ satisfying $A\cap X'\ne\emptyset$ and $B\not\subseteq X'$, so by the 
theorem applied to $P'$ there is some $b\in B-X'$ with $f(b)\in X'$. Since 
$B-X'=B-X$ and $X'\subseteq X$, the theorem is true for $P$, a contradiction. 
Thus $P=A\cup B$, verifying the claim.

Note that, with $P=A\cup B$ and the conditions that $A$ and $B$ are antichains 
satisfying $A\le B$, we have that $P$ is bipartite with minimals $A$ and 
maximals $B$ ($A\cap B$ being the isolated points if any). \\[-.2cm]

{\bf Claim 2:}  $P$ has no isolated elements.

To see this, suppose that $P=Q\cup\{c\}$ where $c$ is incomparable to all elements of $Q$. 
Note that $c\in A\cap B$.  Let $A'=A-\{c\}$ and $B'=B-\{c\}$. Since $|B|\ge|A|\ge 2$, $A'$ and $B'$ are nonempty; also,
 $A'\le B'$ and $|A'|=|A|-1\le|B|-1=|B'|$. Thus the hypotheses of the 
theorem hold for $Q,A'$ and $B'$.

Let $f:B' \to A'$ be the map guaranteed by the minimality of $P$.
Choose any $a \in A'$ and let $f(b_0) = a$ for some $b_0 \in B'$.  Define $g:B \to A$ by
 \begin{equation*}
     g(b) =     
     \begin{cases}
       f(b)    & \text{if} \ b \ne b_0, c\\
       c        & \text{if} \ b = b_0\\
       a        & \text{if} \ b = c,
     \end{cases}
   \end{equation*}
and observe that $g$ maps $B$ onto $A$.

Let $Y$ be an order ideal of $P$ satisfying $Y\cap A\ne\emptyset$ and $B\not\subseteq Y$. The following exhaust the possibilities for $Y$.

\begin{itemize}
\item If $c\in Y$ and $b_0\not\in Y$ then $g(b_0)=c\in Y$.\\

\item If $c\in Y$ and $b_0\in Y$, let $X=Y-\{c\}$. Then $X$ is an order ideal of $Q$, $X\cap A'\ne\emptyset$ (since $b_0\in Y$ implies $Y\cap A'\ne\emptyset$) and $B'\not\subseteq X$, so by assumption there is some $b\in B'-X=B-Y$ so that $f(b)\in X$. Since $b\ne b_0$, $g(b)=f(b)\in X\subseteq Y$.\\

\item If $c\not\in Y$ then $Y$ is an order ideal of $Q$ and $Y\cap A'\ne\emptyset$. If $B'\subseteq Y$, then $Y=Q$, and $g(c)=a\in Y$. If $B'\not\subseteq Y$, then by assumption there is some $b\in B'-Y\subseteq B-Y$ so that $f(b)\in Y$. If $b\ne b_0$ then $g(b)=f(b)\in Y$. If $b=b_0$ then $f(b)=a\in Y$, so $g(c)=a\in Y$ while $c\in B-Y$.
\end{itemize} 

Thus $g$ has the required properties for $P,A$ and $B$, a contradiction. {\bf Claim 2} follows.\\[-.2cm]

A {\it star} is a partially ordered set of length 1 whose covering graph (or comparability graph) is a star in the usual graph-theoretic
sense.\\[-.2cm]

{\bf Claim 3:} $P$ is a length one partially ordered set each of whose connected components is a star (allowing 2-element stars
but no 1-element components).

If some connected component of $P$ is not a star then it must contain 4 elements, say $a, c \in A$ and $b, d \in B$, with the covering 
relations $a \prec b, b \succ c, c \prec d$.  Let $Q$ be obtained from $P$ by
removing the single relation $b \succ c$.  Then in $Q$, $A$ is still the set of minimals and $B$, the set of maximals. Since $Q$ has fewer comparabilities than $P$, by assumption there is $f:B\to A$ satisfying the condition in the theorem. Let $X$ be an order ideal of $P$ so that $A\cap X\ne\emptyset$ and $B\not\subseteq X$. Then $X$ is still an order ideal in $Q$, so there is some $b\in B-X$ with $f(b)\in X$. These conditions still hold in $P$, showing that the same function $f$ enables $P$ to satisfy the theorem, which is a contradiction. This verifies {\bf Claim 3}.
 
 We are now ready to complete the proof by defining an onto function $f:B \to A$ with the required properties. 
 
\begin{figure}[h]
   \scalebox{.4}{\includegraphics{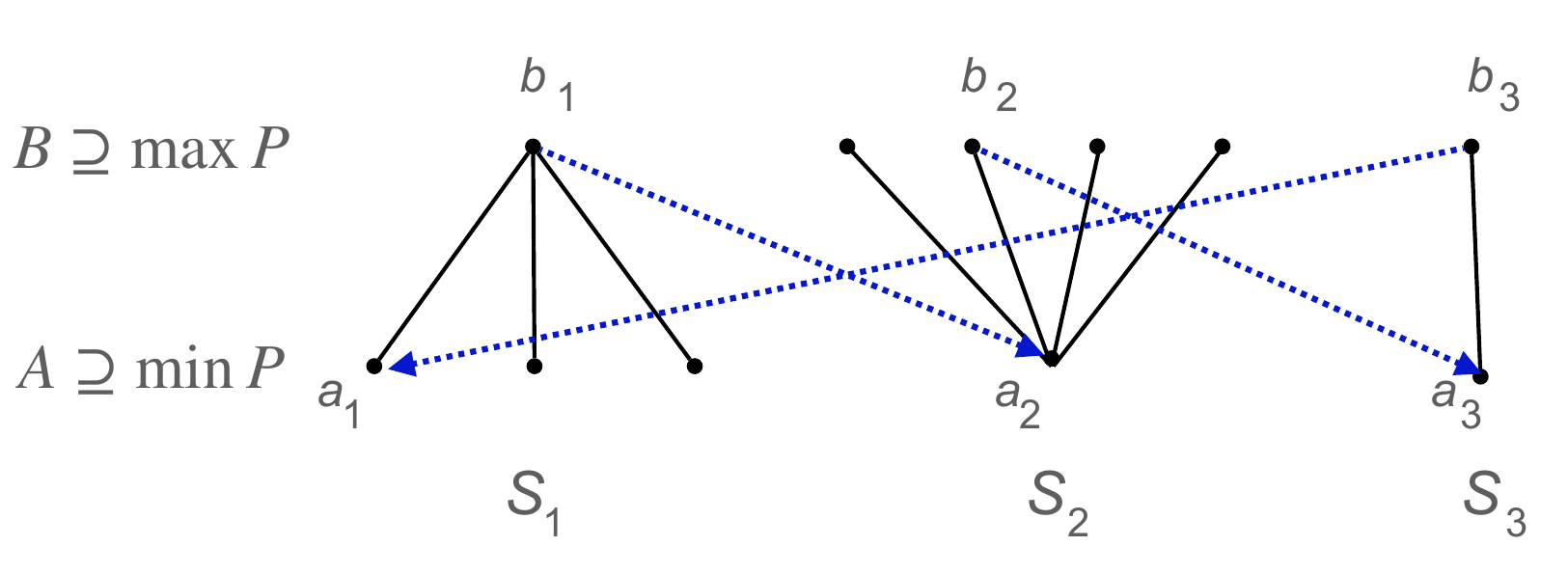}}
   \caption{Complete the surjection $f:B \mapsto A$ arbitrarily}\label{F:stars}
\end{figure}


Suppose that $P$ is the union of disjoint stars $S_i,\  i = 1, 2, \ldots, n$, each oriented as a bipartite
partially ordered set.  For each $i$, let $a_i$ be a minimal element and $b_i$ be a maximal element
of $S_i$.   Define $f:B \to A$ as follows: $f(b_i) = a_{i+1}$, modulo $n$, and complete the definition
of $f$ as a surjection arbitrarily.  (See Figure \ref{F:stars}.)  Note that in case $n = 1$, we have $f(b_1) = a_1$ and $f$, as defined,
satisfies the conclusion of the theorem.

Let $B' = \{b_1, b_2, \ldots, b_n\}$. Suppose that $X$ is a proper order ideal of $P$, that is, $A\cap X\ne\emptyset$ 
and $B\not\subseteq X$.
  \begin{itemize}
      \item If $X \cap B' \ne \emptyset$ and $B' \nsubseteq X$ then choose $i$ such that $b_{i-1} \notin X$
      and $b_i \in X$.  Then $f(b_{i-1}) = a_i \in \ X$ since each $a_j < b_j$.\\
      \item If $X\cap B' = \emptyset$ then pick some $a_i \in  X$ and see that $f(b_{i-1}) = a_i$,
      where $b_{i - 1} \in B - X$.\\
      \item If $B' \subseteq X$ then for any choice of $b \in B - X$, $f(b) \in X$, since either
      $f(b) = a_i \in X$ or $f(b) = a$, distinct from all the $a_j$'s, so $a < b_i \in X$ for some $i$. 
  \end{itemize}
This completes the proof of Theorem \ref{T:poset}. $\square$\\[-.2cm]

\begin{theorem}\label{T:general}
 Let ${\mathcal D}$ be a distributive lattice, and let 
${\mathcal A}$ and 
${\mathcal B}$ be antichains of join-irreducible elements of ${\mathcal D}$ 
satisfying $2 \le |{\mathcal A}|\le|{\mathcal B}|$ and 
${\mathcal A}\le{\mathcal B}$. Then 
$[{\mathcal A},\overline{\mathcal B}]$ is the union of $|{\mathcal B}|$ 
intervals $[{\bf a},\overline{\bf b}]$ with ${\bf a}\in{\mathcal A}$ and 
$\overline{\bf b}\in\overline{\mathcal B}$.
\end{theorem}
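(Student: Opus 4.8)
The plan is to deduce Theorem \ref{T:general} directly from Theorem \ref{T:poset} by passing through the Birkhoff correspondence and invoking Proposition \ref{P:duality}. First I would set $P = J(\mathcal{D})$, so that $\mathcal{D} \cong \mathcal{O}(P)$ and the join-irreducibles $\mathcal{A}, \mathcal{B}$ are identified with antichains $A, B$ of $P$ under the order isomorphism $a \mapsto \mathbf{a} = \downarrow a$ of $P$ onto $J(\mathcal{D})$. Because this map is an order isomorphism, the hypothesis $\mathcal{A} \le \mathcal{B}$ in $\mathcal{D}$ translates to $A \le B$ in $P$, and $2 \le |\mathcal{A}| \le |\mathcal{B}|$ becomes $2 \le |A| \le |B|$.

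Next I would check the hypotheses needed to apply Proposition \ref{P:duality}, whose conclusion (ii) is precisely the statement we want. That proposition requires $A \not\ge B$, whereas we are handed $A \le B$. The bridge is Lemma \ref{L:orders2}(i): if $A$ has no minimum, $B$ has no maximum, and $A \le B$, then $A \not\ge B$. This is where the cardinality hypothesis $|\mathcal{A}|, |\mathcal{B}| \ge 2$ earns its keep, since an antichain with at least two elements has neither a minimum nor a maximum. Thus $A$ has no minimum and $B$ has no maximum, and Lemma \ref{L:orders2}(i) yields $A \not\ge B$.

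With $A \not\ge B$ and $A \le B$ in hand, I would apply Theorem \ref{T:poset} to the pair $A, B$ (both antichains, $|A| \le |B|$, $A \le B$) to obtain an onto function $f: B \to A$ satisfying the order-ideal condition appearing in part (i) of Proposition \ref{P:duality}. Feeding this $f$ into the implication (i) $\Rightarrow$ (ii) of Proposition \ref{P:duality} gives that $[\mathcal{A}, \overline{\mathcal{B}}]$ is the union of $|B| = |\mathcal{B}|$ intervals $[\mathbf{a}, \overline{\mathbf{b}}]$ with $\mathbf{a} \in \mathcal{A}$ and $\overline{\mathbf{b}} \in \overline{\mathcal{B}}$, which is exactly the assertion of Theorem \ref{T:general}.

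Since all the substance resides in Theorem \ref{T:poset} and Proposition \ref{P:duality}, the proof is essentially bookkeeping. The only genuine point requiring care is the translation between the two order relations: one must not conflate $A \not\ge B$ with the negation of $B \le A$ (the \emph{Caution} in the text), and one must confirm that the cardinality hypothesis rules out a minimum of $A$ or a maximum of $B$, so that Lemma \ref{L:orders2}(i) applies. That verification is the main, and quite modest, obstacle.
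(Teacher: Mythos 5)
Your proposal is correct and follows the paper's own proof exactly: translate $\mathcal{A}\le\mathcal{B}$ to $A\le B$ in $P=J(\mathcal{D})$, use Lemma \ref{L:orders2}(i) (with the antichain and cardinality hypotheses ruling out a minimum of $A$ or maximum of $B$) to obtain $A\not\ge B$, then combine Theorem \ref{T:poset} with Proposition \ref{P:duality}, (i) $\Rightarrow$ (ii). Your explicit remark that $|\mathcal{A}|,|\mathcal{B}|\ge 2$ is what makes Lemma \ref{L:orders2}(i) applicable is precisely the point the paper's terser argument relies on.
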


\begin{proof}
With $A$ and $B$ being the counterparts in $P$ of 
${\mathcal A}$ and ${\mathcal B}$ respectively, it follows from 
${\mathcal A}\le{\mathcal B}$ that $A\le B$. Since $|A|\ge 2$ and $A$ and $B$ 
are antichains, Lemma \ref{L:orders2} implies that $A\not\ge B$ and thus 
${\mathcal A}\le\overline{\mathcal B}$. Therefore the 
convex subset $[{\mathcal A},\overline{\mathcal B}]$ in ${\mathcal D}$ is 
defined, and Theorem~\ref{T:poset} and Proposition \ref{P:duality} give the result.
\end{proof}

{\bf Note 5:}  The condition that  $2 \le |{\mathcal A}|$ is needed.  If $\mathcal{A}$ is
a singleton and  $[{\mathcal A},\overline{\mathcal B}]$  is an interval, the result is
trivial.  However, $[{\mathcal A},\overline{\mathcal B}]$ need not even be convex if
$|\mathcal{A}| = 1$.  For instance, take $A = B = \{a\}$.  Then  ${\mathcal A} \le \overline{\mathcal B}$
does not hold.
\vspace{.2cm}

\begin{corollary}
{\bf (Theorem \ref{T:distributive})}. Let ${\mathcal D}$ be a distributive lattice with  $|{\mathcal D}| \ge 3$,  
atoms ${\mathcal A}$ and coatoms ${\mathcal C}$. Then ${\mathcal D}-\{0,1\}$ 
is the union of $\max (|{\mathcal A}|,|{\mathcal C}|)$ intervals.
\end{corollary}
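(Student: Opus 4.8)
The plan is to read the corollary as a direct translation of Theorem \ref{T:general} through the Birkhoff duality, after a normalizing reduction and the disposal of one degenerate case. First I would set $P = J(\mathcal{D})$, the poset of join-irreducibles, so that $\mathcal{D} \cong \mathcal{O}(P)$, and identify the two levels in terms of $P$: the atoms of $\mathcal{D}$ are exactly the principal ideals $\mathbf{a} = \downarrow\!a$ for $a$ a minimal element of $P$, and the coatoms are exactly the meet-irreducibles $\overline{\mathbf{b}} = P - \uparrow\!b$ for $b$ a maximal element of $P$. Thus, taking $A$ and $B$ to be the minimals and maximals of $P$, we get $\mathcal{A} = $ atoms and, crucially, $\overline{\mathcal{B}} = \mathcal{C} = $ coatoms, with $|\mathcal{B}| = |B| = |\mathcal{C}|$. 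Since $|\mathcal{D}| \ge 3$ forces $\mathcal{D} - \{0,1\}$ to be nonempty, and since every element $\ne 0$ lies above some atom while every element $\ne 1$ lies below some coatom, the target set is precisely $\mathcal{D} - \{0,1\} = [\mathcal{A}, \overline{\mathcal{B}}]$, exactly the object analyzed in Theorem \ref{T:general}.

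Next I would reduce to the convenient case. Reversing the order of $\mathcal{D}$ yields another finite distributive lattice in which atoms and coatoms are interchanged, in which every interval $[x,y]$ becomes $[y,x]$ on the same underlying set, and in which $\mathcal{D} - \{0,1\}$ and the quantity $\max(|\mathcal{A}|,|\mathcal{C}|)$ are both unchanged; so without loss of generality $|\mathcal{A}| \le |\mathcal{C}|$ and $\max(|\mathcal{A}|,|\mathcal{C}|) = |\mathcal{C}|$. If $|\mathcal{A}| = 1$, the unique atom $\alpha$ lies below every nonzero element, so the family $\{[\alpha,\gamma] : \gamma \in \mathcal{C}\}$ already covers $\mathcal{D} - \{0,1\}$ using $|\mathcal{C}|$ intervals, and the corollary holds directly. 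Otherwise $2 \le |\mathcal{A}| \le |\mathcal{C}| = |\mathcal{B}|$, and I would verify the hypotheses of Theorem \ref{T:general}: $\mathcal{A}$ and $\mathcal{B}$ are antichains of join-irreducibles (distinct atoms are incomparable, and distinct maximals of $P$ give incomparable principal ideals), and $\mathcal{A} \le \mathcal{B}$ holds because $A \le B$ in $P$ (each minimal lies below some maximal and each maximal above some minimal). Theorem \ref{T:general} then exhibits $[\mathcal{A}, \overline{\mathcal{B}}] = \mathcal{D} - \{0,1\}$ as a union of $|\mathcal{B}| = |\mathcal{C}| = \max(|\mathcal{A}|,|\mathcal{C}|)$ intervals, completing the proof.

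The only genuine content beyond bookkeeping is the correct dualization: recognizing that the coatoms of $\mathcal{D}$ are precisely the meet-irreducibles $\overline{\mathbf{b}}$ attached to the maximals of $P$, so that the coatom level $\mathcal{C}$ coincides with $\overline{\mathcal{B}}$ and Theorem \ref{T:general} applies verbatim, together with the small-case check $|\mathcal{A}| = 1$. Everything else—including the convexity of $[\mathcal{A}, \overline{\mathcal{B}}]$ and the relation $\mathcal{A} \le \overline{\mathcal{B}}$—is already subsumed inside Theorem \ref{T:general}, which invokes Lemma \ref{L:orders2} precisely to secure these facts from $|\mathcal{A}| \ge 2$. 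Consequently I expect no real obstacle here: the corollary is essentially a restatement of Theorem \ref{T:general} in the language of atoms and coatoms, with the self-dual symmetry of the configuration and the $\min(|\mathcal{A}|,|\mathcal{C}|) = 1$ boundary handled by hand.
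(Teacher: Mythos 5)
Your proof is correct and follows essentially the same route as the paper's: dispose of the singleton-atom case, dualize so that $2 \le |\mathcal{A}| \le |\mathcal{C}|$, identify $\mathcal{C}$ with $\overline{\mathcal{B}}$ for $\mathcal{B}$ the maximal join-irreducibles, and invoke Theorem \ref{T:general}. The only difference is that you spell out the Birkhoff-duality identifications and the $|\mathcal{A}|=1$ case explicitly, which the paper leaves as ``trivial.''
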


\begin{proof}
 The result is trivial if either $\mathcal{A}$
 or $\mathcal{C}$ is a singleton.  So, by symmetry, we may assume that 
$2 \le |{\mathcal A}|\le|{\mathcal C}|$. Note that ${\mathcal A}$ is the set of minimal 
join-irreducibles of ${\mathcal D}$, while 
${\mathcal C}=\overline{\mathcal B}$ where 
${\mathcal B}$ is the set of maximal join-irreducibles of ${\mathcal D}$. 
Also $|{\mathcal B}|=|\overline{\mathcal B}|$ and 
${\mathcal A}\le{\mathcal B}$, so the result follows from Theorem \ref{T:general}.
\end{proof}

\section{Proofs of Theorems \ref{T:size2} and \ref{T:slight}}\label{S:more-dist} 

Our interest now is mainly focussed on the convex subset $[A,B]$ between levels $A$ and $B$ of a distributive lattice $D$ 
and on two questions.\\[-.2cm]

 {\bf Question 1:}  When is $[A,B]$ minimally coverable?\\[-.2cm]
  
 {\bf Question 2:}  How many intervals are required to cover $[A,B]$? \\[-.2cm]
 
Theorems  \ref{T:size2}  and \ref{T:slight}, stated in \S\ref{S:intro}, provide limited answers to Questions 1 and 2 respectively. After proving these two theorems, we will speculate on possible improvements to these questions and answers. 

Having obtained Theorem \ref{T:distributive}  from Theorem \ref{T:general}, we must now look to other techniques, not involving the poset $J(D)$ of 
join irreducibles of $D$. Thus in this section we abandon the calligraphic and boldface notation for lattices and their elements 
used previously.
For purposes of the proofs of Theorems \ref{T:size2} and \ref{T:slight}, we adopt the following notation. 
For elements $x < y$ in a partially ordered set $P$, 
 we let $\overline{[x,y]}$ denote any maximal chain of $P$ containing both $x$ and $y$.   Also, we
denote that $x$ and $y$ are an incomparable pair by $x \inc y$.

{\bf Theorem \ref{T:size2}.} {\it
   Let $D$ be a distributive lattice with distinct levels $A$ and $B$ that satisfy $A \le B$ and $\min(|A|, |B|) = 2$.  
   Then $[A, B]$ has a cover by $\max(|A|, |B|)$ intervals.}

 \begin{proof}
 By dualizing $D$ if necessary, we may assume $A = \{a, b\}$.  First, let's see that $a \wedge b \prec a$.  Otherwise
   there is some $x \in D$ with $a \wedge b < x < a$.  Since $\{a, b\}$ is a level
   of $D$ and $D$ is ranked, there is some $y \ne x$ in $D$ such that $a \wedge b < y < b$.
   Then $x \vee y \nleq a$ since $a \wedge b < y$; and, $a \nless x \vee y$ by
   distributivity. Thus $a||x\vee y$ and similarly $b||x\vee y$, and so $\{a, b, x\vee y\}$ is a 3-element antichain, a contradiction. 
   Of course, $a \wedge b \prec b$ as well and, again because there is no 3-element antichain containing $a, b$, the only covers of $a \wedge b$ in $D$ are $a$ and $b$. Thus in the distributive (sub)-lattice $[a\wedge b,1_D]$, $\{a,b\}$ is the set of atoms.   
   
   It follows that the convex set $[A, B]$ in $D$
   is unchanged in $[a \wedge b, 1_D]$.  So, we may assume that $A$ is the
   set of atoms of $D$.  With $B =\{b_1, b_2, \ldots, b_n\}$ we shall prove that
   $[A, B] = [x_1, b_1]\cup[x_2, b_2]\cup\cdots\cup[x_n, b_n]$ for some $x_1, x_2, \ldots,x_n\in\{a,b\}$.
   
   Each of $a$ and $b$ must be less than at least one 
element of $B$, and likewise each element of $B$ must be greater than $a$ or 
$b$ (or both). Without loss of generality we can assume that $a<b_i$ for all 
$i\in\{1,2,\ldots,k\}$ and that $a\;||\;b_j$ (and thus 
$b<b_j$) for all $j\in\{k+1,\ldots,n\}$, where $k$ is some integer in 
$\{1,2,\ldots,n\}$. 

\medskip
First assume that $k<n$. Then we claim that $x_i=\left\{\begin{array}{ll}
a&\mbox{if } i\le k\\b&\mbox{if } i>k
\end{array}\right.$ works, that is,
$$[A, B] = [a,b_1]\cup\cdots\cup[a,b_k]\cup[b,b_{k+1}]\cup\cdots\cup
[b,b_n].$$ 
For otherwise there exists some $c\in [A, B]$ not in this union. By definition of 
$[A, B]$, we may assume that $c\in[b,b_1]$, so $b\le c\le b_1$. Since $c$ is 
not in the above union, we get $c\;||\;a$ and $c\;||\;b_j$ for all 
$j\in\{k+1,\ldots,n\}$. Since $a$ is an atom of $D$ and $k<n$, 
$a\wedge c=a\wedge b_n=0$. Also, the maximal chain 
$\overline{[c,c\vee b_n]}$ cannot contain any of $b_{k+1},\ldots b_n$, 
but must intersect $B$, 
and so $b_i\in\overline{[c,c\vee b_n]}$ for some $i\le k$. Thus 
$b_i<c\vee b_n$. But now by distributivity 
$$0=(a\wedge c)\vee(a\wedge b_n)=a\wedge(c\vee b_n)\ge a\wedge b_i=a,$$ 
a contradiction.

\medskip
Thus $k=n$, which means that $a<b_i$ for all $i$. By symmetry we may also 
assume that $b<b_i$ for all $i$, so in other words the subposet of $D$ 
formed from $A\cup B$ is complete bipartite. In this case we claim that the 
choice  $x_i=\left\{\begin{array}{ll}
a&\mbox{if } i\ne k\\b&\mbox{if } i=k
\end{array}\right.$ works for some $k\in\{1,\ldots,n\}$; that is, at least 
one of the $n$ families of intervals
\begin{eqnarray*}
{\mathcal F}_1 &=& \{[b,b_1],[a,b_2],[a,b_3]\ldots,[a,b_n]\}\\
{\mathcal F}_2 &=& \{[a,b_1],[b,b_2],[a,b_3]\ldots,[a,b_n]\}\\
&\vdots&\\
{\mathcal F}_n &=& \{[a,b_1],[a,b_2],\ldots,[a,b_{n-1}],[b,b_n]\}
\end{eqnarray*}
has union equal to all of $[A, B]$. 

\medskip
Suppose not. Then there exist $c_1,\ldots,c_n\in [A, B]$ so that, for all $i$, 
$c_i\not\in\bigcup{\mathcal F}_i$. 

\medskip
Suppose first that $c_1\in[a,b_1]$ and $c_2\in[a,b_2]$. Then 
$a\le c_1\le b_1$, so 
$c_1\not\in\bigcup{\mathcal F}_1$ implies that $c_1$ is incomparable to 
$b_2,b_3,\ldots,b_n$ and $b$. Similarly we know that $c_2$ is 
incomparable to $b_1,b_3,\ldots,b_n$ and $b$. Thus 
$c_1\vee c_2\not\le b_i$ for any $i\in\{1,\ldots, n\}$. Also, since $b$ is 
an atom, $c_1\wedge b=c_2\wedge b=0$, so by distributivity 
$(c_1\vee c_2)\wedge b=0$, so $c_1\vee c_2\;||\; b$. Thus 
$c_1\vee c_2\not\ge b_1,b_2,\ldots,b_n$, and so $c_1\vee c_2$ is 
incomparable 
to the entire level $B$, which is impossible. We conclude by symmetry that 
$c_i\in[a, b_i]$ can occur for at most one value of $i$.

\medskip
Next, suppose that in fact $c_i\not\in[a,b_i]$ for any $i$. Since 
$c_i\in [A, B]-\bigcup{\mathcal F}_i$, this forces that for all $i$ there is 
$j\ne i$ so that $c_i\in[b,b_j]$, which 
means $b\le c_i\le b_j$. Since $c_i\not\in\bigcup{\mathcal F}_i$, 
$c_i\not\in[b,b_i]\cup[a,b_j]$, which implies that $c_i\;||\;b_i$ 
and $c_i\;||\;a$. Thus 
$c_i\wedge a=0$ for all $i$, which by distributivity means that 
$(c_1\vee\cdots\vee c_n)\wedge a=0$ and so $c_1\vee\cdots\vee c_n\;||\; a$. It 
follows that $c_1\vee\cdots\vee c_n\not\ge b_i$ for any $i$, while 
$c_i\;||\;b_i$ means that $c_1\vee\cdots\vee c_n\not\le b_i$ for any $i$. 
Therefore $c_1\vee\cdots\vee c_n\;||\;b_i$ for all $i$, a contradiction.

\medskip
So we may now assume by symmetry that $c_1\in[a,b_1]$ and that, for all 
$i>1$, $c_i\in[b,b_j]$ for some $j\ne i$. It follows that 
$b\le c_2,c_3,\ldots,c_n$, and that for each $i>1$ there is $j\ne i$ so that 
$c_i\le b_j$. Since  $c_i\not\in\bigcup{\mathcal F}_i$ and  
$[a,b_j]\in{\mathcal F}_i$, we get that $c_i\;||\;a$ for all $i>1$. By 
distributivity, $c_2\vee\cdots\vee c_n\;||\; a$. Thus 
$c_2\vee\cdots\vee c_n\not\ge b_i$ for any $i$. Since $c_2,\ldots,c_n\ge b$ 
while  $c_i\not\in\bigcup{\mathcal F}_i$ and $[b,b_i]\in{\mathcal F}_i$, 
we also know that $c_2\vee\cdots\vee c_n\not\le b_i$ for any $i>1$. Thus, 
since the maximal chain 
$\overline{[c_2\vee\cdots\vee c_n\;,\;c_2\vee\cdots\vee c_n\vee b_2]}$ 
avoids $b_2,\ldots,b_n$ and so must contain $b_1$, we have that 
\begin{equation}
b_1<c_2\vee\cdots\vee c_n\vee b_2.
\end{equation} 
From $c_1\in[a,b_1]$ we know that $c_1\le b_1$, and so 
$c_1\not\in\bigcup{\mathcal F}_1$ means that $c_1\;||\; b$. Thus 
$c_1\wedge(c_2\vee\cdots\vee c_n)\not\ge b$, and also 
$c_1\wedge(c_2\vee\cdots\vee c_n)\not\ge a$ since 
$c_2\vee\cdots\vee c_n\;||\;a$; therefore  
\begin{equation}
c_1\wedge(c_2\vee\cdots\vee c_n)=0.
\end{equation}
Now we look at the maximal chain $\overline{[c_1,c_1\vee b_2]}$. It cannot 
contain any of $b_2,b_3,\ldots, b_n$, because $a\le c_1$ and 
$[a,b_2],\ldots,[a,b_n]\in{\mathcal F}_1$ while 
$c_1\not\in\bigcup{\mathcal F}_1$. So $\overline{[c_1,c_1\vee b_2]}$ must 
contain $b_1$, that is, $b_1<c_1\vee b_2$. 
But now, by equations (1), (2) and distributivity,
$$b_1\le(c_1\vee b_2)\wedge(c_2\vee\cdots\vee c_n\vee b_2)
=(c_1\wedge(c_2\vee\cdots\vee c_n))\vee b_2=0\vee b_2=b_2,$$
a contradiction.
\end{proof}

The proof of Theorem \ref{T:size2}  starts by showing that we can assume that the two-element level is the atoms. The fact that we apparently need to reduce Theorem  \ref{T:size2} to this situation, and the proof then goes through, could be construed as some support for the reasonableness of the following problem.

\begin{problem}\label{Prob:atoms}  Determine whether $[A,B]$ is minimally coverable whenever $A$ is the set of atoms of
a distributive lattice $D$ and $B$ is another level of $D$.\end{problem}

Besides Theorem  \ref{T:size2}, as support for Problem \ref{Prob:atoms} we can only report a ridiculously long affirmation of Problem 2 in the case $|A|=|B|=3$. This proof will not be offered here but can be obtained from the authors by anyone interested in seeing it.  On the other side, the example in Figure \ref{F:Glued} shows that 
the same result for levels that do not include the atoms or coatoms is unlikely.

Turning now to Question 2 posed at the beginning of this section, Theorem  \ref{T:slight} from \S\ref{S:intro} provides a slight improvement on the obvious upper bound of $|A| \cdot |B|$.  Our proof requires a technical fact about distributive lattice levels.

\begin{lemma}\label{l:unique}
Let $D$ be a distributive lattice and let $L$ and $U$ be levels 
of $D$ with $a \in L$ and $b, c\in U$, $b \ne c$.  If there exist $x,y\in D$ so that $a<x<b$, $a<y<c$,  
$x \inc v$ for all $v \in (L\cup U)-\{a, b\}$, and $y \inc w$ for all 
$w \in (L\cup U)-\{a,c\}$, then $L=\{a\}$.
\end{lemma}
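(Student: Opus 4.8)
The plan is to pass to the Birkhoff dual developed in Section~\ref{S:distributive}: realize $D$ as the lattice $\mathcal{O}(P)$ of order ideals of $P=J(D)$, so that every element is an order ideal, its rank is its cardinality, joins are unions, meets are intersections, and $L$ (resp.\ $U$) is the set of \emph{all} order ideals of some fixed size $p$ (resp.\ $q$), with $p<q$. In this language $a\subseteq x\subseteq b$ and $a\subseteq y\subseteq c$ are inclusions, and the incomparability hypotheses say precisely that $a$ is the only size-$p$ ideal contained in $x$ (and in $y$), while $b$ (resp.\ $c$) is the only size-$q$ ideal containing $x$ (resp.\ $y$). The goal $L=\{a\}$ becomes: $a$ is the only size-$p$ order ideal of $P$.

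The main tool will be an elementary exchange principle: for a finite poset $Q$ and an order ideal $d$, $d$ is the unique ideal of $Q$ of size $|d|$ if and only if every maximal element of $d$ lies below every minimal element of $Q-d$; the substantive direction is proved by the swap $d\mapsto(d-\{m\})\cup\{n\}$ available whenever a maximal element $m$ of $d$ is incomparable to a minimal element $n$ of $Q-d$. Applied with $Q=P$ and $d=a$, this turns the goal into the statement that every maximal element of $a$ lies below every minimal element of $P-a$. Applied with $Q=x$ and with $Q=y$ (where $a$ is the unique size-$p$ ideal), it shows this already holds for those minimal elements of $P-a$ that lie in $x$ or in $y$. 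Hence it suffices to prove that \emph{every minimal element of $P-a$ lies in $x\cup y$}.

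Next I would establish the two identities $b=x\cup(b\cap c)$ and $c=y\cup(b\cap c)$. Since $x\not\subseteq c$, the ideal $x\cup c$ has size exceeding $q$, so a saturated chain from $x$ up to $x\cup c$ meets a size-$q$ ideal $u$ with $x\subseteq u\subseteq x\cup c$; as $b$ is the only size-$q$ ideal containing $x$, we get $u=b$ and thus $b\subseteq x\cup c$. Combining $b\subseteq x\cup c$ with $x\subseteq b$ gives $b=x\cup(b\cap c)$, and symmetrically $c=y\cup(b\cap c)$.

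Finally, take a minimal element $n$ of $P-a$ with $n\notin x\cup y$, so everything below $n$ lies in $a$, and derive a contradiction. Because $b-x$ is the unique size-$(q-|x|)$ ideal of $P-x$, the exchange principle forces every maximal element of $b-x$ below every minimal element of $(P-x)-(b-x)=P-b$. If $n\notin b$ then $n$ is such a minimal element (all elements below it lie in $a\subseteq x$), so every maximal element of the nonempty set $b-x$ lies below $n$ and hence in $a\subseteq x$, which is impossible; thus $n\in b$, and symmetrically $n\in c$. Now invoke $b\neq c$: since $|b|=|c|$ we have $c\not\subseteq b$, so I choose $w_0\in c-b$ and let $w\le w_0$ be minimal in $P-b$, giving $w\in c$, $w\notin b$, and therefore $w\in y$ by the identity $c=y\cup(b\cap c)$. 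Taking a maximal element $g\ge n$ of $b-x$, the exchange principle (with $w$ minimal in $P-b$) yields $g<w$, so $n\le g<w$ with $w\in y$; as $y$ is an order ideal this forces $n\in y$, contradicting $n\notin y$. I expect this last case --- producing the witness $w\in c-b$ and pushing $n$ beneath it via the exchange condition for $b$ --- to be the main obstacle; once the exchange principle and the two identities are in place, everything else is bookkeeping.
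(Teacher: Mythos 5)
Your proof is correct, and it takes a genuinely different route from the paper's. The paper argues directly inside the lattice $D$: a maximal chain through $x$ and $x\vee y$ must cross $U$ at an element comparable to $x$, forcing $x\vee y>b$ (and symmetrically $x\vee y>c$); then, assuming some $d\in L-\{a\}$ exists, maximal chains through $L$ give $x\wedge d=a\wedge d=y\wedge d$, distributivity turns this into $(x\vee y)\wedge d\ge b\wedge d$ and hence $x>b\wedge d$, and a second application of distributivity yields $(x\vee d)\wedge b=x$, so $x\vee d\inc b$ and the maximal chain $\overline{[x,x\vee d]}$ cannot meet $U$ at all, a contradiction. You instead pass to the Birkhoff dual $P=J(D)$ --- precisely the machinery of Section~\ref{S:distributive}, which the paper pointedly sets aside at the start of Section~\ref{S:more-dist} (``we must now look to other techniques, not involving the poset $J(D)$'') --- and reduce everything to an exchange principle for ideals of fixed size, applied to the subposets $x$, $y$, $P-x$, $P-y$. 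I checked the translation of the hypotheses (that $a$ is the unique size-$p$ ideal inside $x$ and inside $y$, and $b$, $c$ are the unique size-$q$ ideals containing $x$, $y$ respectively), the saturated-chain proof of the identities $b=x\cup(b\cap c)$ and $c=y\cup(b\cap c)$, and the final witness argument producing $w\in(c-b)\cap y$ with $n\le g<w$; all are sound. One small point to spell out in a full write-up: you use \emph{both} directions of the exchange principle --- the swap direction to exploit the uniqueness hypotheses, and the converse direction to conclude $L=\{a\}$ at the very end; the converse is easy (the domination condition forces $d\cup\{n\}\subseteq d'$ for any competing ideal $d'$ of the same size, a contradiction), but it is doing work and deserves its two lines. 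As for what each approach buys: the paper's proof is shorter and self-contained, needing only distributivity and the fact that maximal chains meet every level, while yours is purely combinatorial, makes visible exactly where each incomparability hypothesis is consumed, and shows that the duality framework of Proposition~\ref{P:duality} is in fact sufficient for this lemma, contrary to the paper's suggestion that such techniques had to be abandoned in Section~\ref{S:more-dist}.
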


\begin{proof}
The maximal chain $\overline{[x, x \vee y]}$ must intersect $U$, 
and since $x$ is comparable only to $b$ in $U$,  $\overline{[x, x \vee y]}$ 
must contain $b$. Since $y \inc b$, we cannot 
have $x \vee y \le b$, so $x\vee y >b$. By symmetry, $x\vee y >c$. 

\noindent
Now suppose that $L\ne\{a\}$, and let $d\in L$, $d\ne a$. The maximal chain 
$\overline{[x \wedge d ,x ]}$ must intersect $L$, and since $x$ is comparable only 
to $a$ in $L$, we know that $x\wedge d < a$. Similarly, $y\wedge d<a$. Thus
$$x\wedge d=a \wedge d=y \wedge d=(x\vee y)\wedge d \ge b \wedge d,$$
so $x > b \wedge d$. But now
$$(x \vee d)\wedge b =( x \wedge b) \vee (d\wedge b)=x\vee(b \wedge d)=x,$$
and so $x\vee d$ is incomparable to $b$. But this means that the maximal chain 
$\overline{[x, x \vee d]}$ does not contain $b$. Since $x$ is only comparable 
to $b$ in $U$, $\overline{[x, x \vee d]}$ cannot 
intersect $U$ at all, a contradiction. \end{proof}

{\bf Theorem \ref{T:slight}.} {\it 
  Let $D$ be a distributive lattice with distinct levels $A$ and $B$ that satisfy $A \le B$ and $|A|, |B| > 1$. 
  Then $[A, B]$ has a cover by at most

\begin{enumerate}

  \item $|A| \cdot |B| - \min (|A|, |B|)$ intervals in $D$, if $|A| \ne |B|$ or $|A| = |B|$ is even,\\
  
  \item $|A| \cdot |B| -  \min (|A|, |B|)+ 1$ intervals in $D$, if  $|A| = |B|$ is odd.
  
\end{enumerate}}

\begin{proof}
Let $A=\{ a_1,\ldots, a_m\}$ and $B=\{b_1,\ldots,b_n\}$. Then 
$$[A, B]=\cup\{[a_i, b_j]\;|\;1\le i\le m,1\le j\le n\}$$
where of course $[a_i, b_j] = \emptyset$ whenever $a_i \not< b_j$. By dualizing $D$ if necessary, we can assume
that $m \le n$.

Let $s$ and $t$ be positive integers satisfying $s\le m$ and $s\le t\le n$. Consider the $t$ sets of intervals 
  \begin{align*}
        {\mathcal S}_0 &= \{[a_i, b_j]\;|\;1\le i\le m,1\le j\le n\}-\{[a_1,b_1], [a_2,b_2],\ldots,[a_s, b_s]\},\\
        {\mathcal S}_1 &= \{[a_i,b_j]\;|\;1\le i\le m,1\le j\le n\}-\{[a_1,b_2], [a_2,b_3],\ldots,[a_s,b_{s+1}]\},\\[-.2cm] 
                                 & \vdots\\[-.2cm]
        {\mathcal S}_{t-1} &= \{[a_i,b_j]\;|\;1\le i\le m,1\le j\le n\}-\{[a_1,b_t], [a_2,b_{t+1}],\ldots,[a_s,b_{t+s-1}]\}.
   \end{align*}
Here, the subscripts of the $b$'s in the intervals $[a_1,b_{k+1}], \ldots, [a_s,b_{k+s}]$
missing from each $S_k$ are taken modulo $t$. Thus the second coordinates of the intervals missing from each $S_k$
form an $s$-element subset of $\{b_1, \ldots, b_t\}$. Each 
${\mathcal S}_i$ has at most $mn-s$ members. Note that, since $s\le m$ and $t\le n$, 
the (at most) $st$ intervals missing
from the various ${\mathcal S}_i$'s are all distinct.

\medskip
Suppose that none of the ${\mathcal S}_i$'s covers all of $[A, B]$. In 
particular ${\mathcal S}_0$ does not 
cover all of $[A, B]$, so there is some $z_0\in [A, B]$ not in 
any interval of ${\mathcal S}_0$, which means that 
$z_0\in\cup_{i=1}^s[a_i,b_i]$. Without loss of generality, 
$z_0\in[a_1,b_1]$. Suppose that also $z_0\in[a_j,b_j]$ 
for some $j>1$; then $a_1 \le z_0 \le b_1$ and $a_j\le z_0 \le  b_j$, so 
$z_0\in[a_1,b_j]\in{\mathcal S}_0$, which contradicts the choice of 
$z_0$. Thus, among all 
the intervals $[a_i,b_j]$,  $z_0$ lies only in the interval 
$[a_1,b_1]$. Similarly, for each $k\in\{0,1,\ldots,t-1\}$ there is an 
element $z_k\in [A, B]$ which lies in a unique interval 
$[a_{i_k},b_{i_k+k}]$ among all intervals $[a_i,b_j]$.   Since $z_k \notin A \cup B$, $a_{i_k} < z_k < b_{i_{k}+k}$
for all $k$.

\medskip
{\bf Case 1}: Assume that $m < n$.  Let $s=m$, $t=m+1$. Since there are $m+1$ 
$z_k$'s and corresponding intervals, and only $m$ elements of $A$, there must 
be two 
$z_k$'s whose intervals have the same left endpoint $a_{i_k}$ (but different 
right endpoints, since $n\ge t$). Since $m>1$, this is  a contradiction to
Lemma \ref{l:unique}. Therefore one of the ${\mathcal S}_i$'s must cover all of 
$[A, B]$. Since each ${\mathcal S}_i$ has at most $mn-m$ intervals, this proves 
Theorem \ref{T:slight} in the case $m \ne n$.

\medskip
{\bf Case 2}:  Assume that $m=n$. Let 
$$s=t=\left\{\begin{tabular}{ll}$m$ & if $m$ is even,\\$m-1$ & if $m$ is odd.
\end{tabular}\right.$$ 
(In other words, $s=t$ is the largest even integer at most $m=n$.) 
Note that $s\ge 2$ since $m>1$. If any two of the $i_k$'s coincide, then 
(since $s>1$) we would have a contradiction to Lemma \ref{l:unique}. 
We argue similarly, using the dual of Lemma \ref{l:unique},  if any of the $i_k+k$'s coincide. So we must have the following 
situation: for each $k$, $z_k$ lies in a unique interval $[a_{f(k)},b_{g(k)}]$, 
where $\{f(0),\ldots,f(s-1)\}=\{1,2,\ldots,s\}=\{g(0),\ldots,g(s-1)\}$  and 
$g(k)\equiv f(k)+k\pmod s$ for each $k$. Thus 
$$\sum_{k=0}^{s-1} f(k)=\frac{s(s+1)}{2}=\sum_{k=0}^{s-1} g(k)\equiv
\sum_{k=0}^{s-1} f(k)+\sum_{k=0}^{s-1} k\pmod s,$$
so 
$$0\equiv\sum_{k=0}^{s-1} k=\frac{s(s-1)}{2}\pmod s,$$
which happens exactly if $s$ is odd. Since $s$ is even, we get a 
contradiction. Thus one of the ${\mathcal S}_i$'s must cover $[A, B]$,  
and Theorem \ref{T:slight} follows.
\end{proof}

Regarding Question 2 posed at the beginning of this section, Theorem \ref{T:slight} shows 
that the answer to this question is wide open.  From the example in Figure \ref{F:Glued}, 
we optimistically propose the following:

\begin{problem}\label{Prob:upper}  Given levels $A \le B$ in a distributive lattice $D$, determine if $[A, B]$ can be covered by $|A|+|B|-2$ intervals.  If not, can $|A| + |B| + 2$ be replaced by some linear function of $\max(|A|,|B|)$? \end{problem}



\end{document}